\documentclass{article}
\usepackage[]{KLMM/klmm}
\usepackage{listings}

\newcommand{\Z}{\mathbb{Z}}
\newcommand{\Q}{\mathbb{Q}}
\newcommand{\Fp}{\mathbb{F}_p}
\newcommand{\Zp}{\mathbb{Z}_p}
\newcommand{\Qp}{\mathbb{Q}_p}

\newcommand{\bv}{b_v}
\newcommand{\bw}{b_w}
\newcommand{\sgn}{\operatorname{sgn}}
\newcommand{\prim}{\operatorname{prim}}
\newcommand{\succW}{\operatorname{succ}}
\newcommand{\signF}{\operatorname{sign}_{F}}
\newcommand{\signFp}{\operatorname{sign}_{F'}}
\newcommand{\dist}{\operatorname{dist}}
\newcommand{\Mz}{M_0}
\newcommand{\Mzf}{M_{0,4}}

\newcommand{\true}{\mathrm{true}}

\newcommand{\cone}{\operatorname{cone}}

\newcounter{algorithm}
\newenvironment{algorithmblock}[1]
  {\par\refstepcounter{algorithm}\medskip
   \noindent\begin{minipage}{\linewidth}
   \hrule\smallskip
   \noindent\textbf{Algorithm \thealgorithm. #1}\par\smallskip
   \hrule\smallskip
   \small}
  {\smallskip\hrule\end{minipage}\medskip\par}

\makeatletter
\newenvironment{psmallmatrix}
  {\left[\begin{smallmatrix}}
  {\end{smallmatrix}\right]}
\makeatother

\newtheorem{hypothesis}[theorem]{Hypothesis}

\allowdisplaybreaks

\lstdefinestyle{lean}{
  basicstyle=\small\ttfamily,
  columns=fullflexible,
  keepspaces=true,
  breaklines=true,
  frame=single,
  framerule=0.3pt,
  rulecolor=\color{black!30},
  xleftmargin=0.6em,
  framexleftmargin=0.6em,
  aboveskip=0.8em,
  belowskip=0.8em
}
\title{Every Nonnegative Integer Is a Sum of a Triangular, a Pentagonal,
and a Heptagonal Number}

\author{Yichuan Cao\institution{State Key Laboratory of Mathematical Sciences,
Academy of Mathematics and Systems Science, Chinese Academy of Sciences, and
University of Chinese Academy of Sciences. 
This work is supported by the Strategic Priority
Research Program of Chinese Academy of Sciences under Grant XDA0480502 and
XDA0480503.}
    \and Dakai~Guo\instref{1}
    \and Ruichen Qiu\instref{1}
    \and Ruyong Feng\instref{1}
    \and Xiao-Shan Gao\instref{1}
}
\date{\today}

\begin{document}
\maketitle

\begin{abstract}
In this paper, it is proved that any nonnegative integer can be written in the following form
\[
  \frac{x(x+1)}2+\frac{y(3y+1)}2+\frac{z(5z+1)}2,\qquad x,y,z\in\N.
\]
This settles the conjecture recorded as OEIS A287616.
All parts of the proof have been formalized in Lean~4, with the exception of two results: one externally cited theorem and one statement verified by symbolic computation.
Both the natural-language proof and the Lean formalization were generated
by the MechMath Agent Team developed by the authors.
\end{abstract}

\noindent\textbf{2020 Mathematics Subject Classification.}
Primary 11E25, 11D85; Secondary 11E20, 11Y50, 68V20.

\noindent\textbf{Keywords.}
Sums of polygonal numbers, ternary quadratic forms, genus theory, finite
descent, computer-assisted proof, Lean formalization.

\section{Introduction}
\label{sec:intro}

For a nonnegative integer $x$ let
\[
  T(x)=\frac{x(x+1)}2,\qquad
  P(y)=\frac{y(3y+1)}2,\qquad
  H(z)=\frac{z(5z+1)}2
\]
be, respectively, the $x$-th triangular number, the $y$-th second pentagonal
number, and the $z$-th second heptagonal number.  In his study of universal sums
of three quadratic polynomials, Zhi-Wei Sun conjectured that the triple
$(T,P,H)$ is \emph{universal}: every nonnegative integer is a sum of one value
from each family \citep{Sun2015}.  The sequence of integers admitting such a
representation is recorded as OEIS A287616 \citep{OEIS287616}.  The present paper
proves the conjecture.

\begin{theorem}[OEIS A287616]\label{thm:main-intro}
For every $n\in\N$, there exist $x,y,z\in\N$ such that
\[
  n=T(x)+P(y)+H(z).
\]
\end{theorem}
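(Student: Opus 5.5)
Completing squares turns the problem into a ternary quadratic form question. Since $8T(x)+1=(2x+1)^2$, $24P(y)+1=(6y+1)^2$ and $40H(z)+1=(10z+1)^2$, multiplying $n=T(x)+P(y)+H(z)$ by $120$ gives
\[
  120n+23 = 15(2x+1)^2+5(6y+1)^2+3(10z+1)^2 .
\]
So the first step is to prove that for every $n\in\N$ the integer $N=120n+23$ is represented as $N=15a^2+5b^2+3c^2$ with congruence conditions $a\equiv 1\pmod 2$, $b\equiv 1\pmod 6$, $c\equiv 1\pmod{10}$.

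The sign conditions cost nothing: we may replace $b$ by $-b$ and $c$ by $-c$, and $x=(a-1)/2$ with $a$ odd covers all of $\N$ up to the sign of $a$. Hence it is enough to have $a$ odd, $b\equiv\pm1\pmod 6$, and $c\equiv\pm1\pmod{10}$.

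Next I would reduce these residue conditions modulo small primes:
\begin{itemize}
\item modulo $3$ and $5$, the congruence $N\equiv 23$ forces $3\nmid b$ and $5\nmid c$;
\item modulo $8$, the parities work out to force $a,b,c$ all odd;
\item the genuinely extra conditions are $b\not\equiv 0\pmod 3$ (which is automatic) and $c\equiv\pm1\pmod 5$ rather than $c\equiv\pm 2\pmod 5$.
\end{itemize}
I would check each of these claims carefully, and then rephrase the whole problem as representation of $N$ by a coset (shifted lattice) of the form $15a^2+5b^2+3c^2$.

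The standard route is then:
\begin{enumerate}
\item Compute the genus of the form $f=15a^2+5b^2+3c^2$, or of the associated lattice coset, and show that every $N\equiv 23\pmod{120}$ is locally represented with the required congruences at all primes. This is a local computation at $2$, $3$ and $5$.
\item Determine the class number of the genus. If the genus (respectively the spinor genus of the coset) has one class, we are done by the externally cited theorem on representations by genera of ternary forms. That theorem is presumably the "externally cited theorem" mentioned in the abstract.
\item More likely the genus has several classes. Then I would use a finite descent: take a representation of $N$ by some form in the genus, and use explicit rational isometries (matrices with denominators $2$, $3$ or $5$, of the kind set up with the \texttt{psmallmatrix} macros) to map it to a representation by $f$. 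Alternatively, the isometries can move a representation with the wrong residue, e.g. $c\equiv\pm2\pmod5$, to one with the right residue. The step that certifies these isometries preserve integrality and the congruence classes is the "statement verified by symbolic computation".
\end{enumerate}

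The main obstacle is the spinor exceptions, together with showing the descent terminates. Ternary forms can miss entire square classes $tm^2$ within a genus, so I would check whether $N=120n+23$ can lie in a spinor-exceptional square class. I would also need to show that when a representation has a "bad" residue pattern, there is an isometry that strictly improves some invariant, such as a $5$-adic valuation or a distance to the good coset; this is the role I expect the \verb|\dist| notation to play. Any finitely many $n$ not covered by the argument (if, say, effective bounds are only asymptotic) would be checked by direct computation.
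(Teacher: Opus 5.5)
There is a genuine gap, and it is in your first reduction. The sign conditions are not free. Since $y=(b-1)/6$ and $z=(c-1)/10$ must be nonnegative, you need $b\equiv1\pmod 6$ \emph{with} $b>0$ and $c\equiv1\pmod{10}$ \emph{with} $c>0$, i.e.\ $|b|\equiv1\pmod6$ and $|c|\equiv1\pmod{10}$. Negating $b$ swaps the classes $1$ and $5$ modulo $6$, and negating $c$ swaps $1$ and $9$ modulo $10$. Only the triangular coordinate is sign-free, since $T(-x-1)=T(x)$.

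With $b\equiv\pm1\pmod6$ and $c\equiv\pm1\pmod{10}$ you would prove only the all-integer variant, because $P(-k)=k(3k-1)/2$ and $H(-k)=k(5k-1)/2$ are the ordinary pentagonal and heptagonal numbers; the paper stresses that this is a different result. Concretely, $143=15\cdot1^2+5\cdot5^2+3\cdot1^2$ has all coordinates odd, but $b=5$ corresponds to $b=-5$ in the class $1\pmod 6$ and gives $1=T(0)+P(-1)+H(0)$. The admissible representation is $(3,1,1)$.

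The correct target mixes signs with residues, so it is not a coset condition. Genus, spinor-genus or coset local--global theorems cannot enforce it: they produce some vector of norm $N$ in a prescribed coset, with no control over the signs of its coordinates. In particular they cannot exclude an output with $b=-5$.

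Your residue bookkeeping is also wrong in both directions. The condition $c\equiv\pm1\pmod5$ is not extra: $3c^2\equiv3\pmod5$ forces it, just as $b\equiv\pm1\pmod3$ is forced. Conversely, reduction modulo $8$ does not force all coordinates odd. The identities $143=15\cdot1^2+5\cdot2^2+3\cdot6^2=15\cdot2^2+5\cdot4^2+3\cdot1^2$ realize the parity patterns $(1,0,0)$ and $(0,0,1)$ of Lemma~\ref{lem:transport}(4). If your claims were right, every side condition would be automatic and the theorem would collapse to plain representability of $120n+23$ by $Q$; that should have been a warning.

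Your genus step (a two-class genus, with transfer from the mate $g_2$ by rational isometries) does match the paper's primitive seed. But it yields only \emph{some} primitive representation. The missing idea is the descent that follows. The paper lowers a lexicographic potential $\Phi$ whose leading bits $b_v,b_w$ record exactly the failures of $|v|\equiv1\pmod6$ and $|w|\equiv1\pmod{10}$, plus an evenness penalty. It does this with the similarities $P_{uv},P_{uw},P_{vw}$, which have denominators $2,3,4$. These moves transport $v\bmod3$ and $w\bmod5$ rigidly, so whether the normalized output is good is decided by the \emph{sign} of an output coordinate. That sign is controlled by archimedean windows such as $v>3u$, $2w>5u$, $w>5v$, and the leftover cone $K$, where no one-step move applies, needs the certified finite cover. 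Nothing in your plan addresses this sign-controlled step, and it is where the whole difficulty of the theorem lies.
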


\paragraph{Strategy.}
The three elementary identities
\[
  8T(x)+1=(2x+1)^2,\quad 24P(y)+1=(6y+1)^2,\quad 40H(z)+1=(10z+1)^2
\]
turn Theorem~\ref{thm:main-intro} into a constrained representation problem for
the diagonal ternary form
\[
  Q(u,v,w)=15u^2+5v^2+3w^2 :
\]
an integer $n$ has the desired representation if and only if $m=120n+23$ is
represented by $Q$ with $u$ odd, $v\equiv1\pmod 6$, and $w\equiv1\pmod{10}$.  The
proof then has two parts.  First, an unconditional \emph{primitive seed}: every
$m\equiv23\pmod{120}$ is primitively represented by $Q$.  This combines $p$-adic
Hensel lifting, the two-class genus structure of $Q$, and an explicit transfer
from the genus mate $g_2=2x^2-2xy+8y^2+15z^2$.  Second, a finite \emph{descent}:
three rational similarities of $Q$ lower a lexicographic potential $\Phi$ until
the congruence side conditions are met.  Away from a thin residual cone the
descent is elementary.  On the cone, a finite-key determinacy theorem shows that
the success of every short move-word depends only on a finite sign-and-residue
key of the starting vector, and an exact-rational linear-programming certificate
proves that every one of the finitely many remaining cells escapes.  The descent
is well founded for each fixed $m$, so it terminates at a good representation.

\paragraph{Computer assistance and formalization.}
The reduction, the primitive seed, the move calculus, and the residual structure
are ordinary mathematics.  The single computational input is the finite cover of
the residual cone, discharged by exact integer and rational certificates with no
floating-point feasibility decision.  The mathematical core --- the statement, the
reduction, the primitive seed, the transport law and elementary descents, and the
residual gap analysis --- has in addition been formalized in Lean~4 over Mathlib,
and the conjecture itself is machine-checked.  In fact every conclusion of the
proof is formalized except two: one classical theorem that we cite --- the
genus-theoretic primitive local--global statement for $Q$ (the Hasse--Minkowski
theorem together with the two-class genus fact) --- and one statement established
by the computer-verified finite cover.  These two enter the development as
explicitly named inputs, and everything built on them is machine-checked.
Section~\ref{sec:lean} describes this development.  Both the natural-language proof
and the Lean formalization were generated by the MechMath Agent Team developed by
the authors~\citep{MechMath}.

\paragraph{Organization.}
Section~\ref{sec:reduction} states the square reduction.
Sections~\ref{sec:seed}--\ref{sec:final} give the mathematical proof: the
primitive seed, the move calculus and descent potential, the residual cone, the
finite cover that discharges the residual descent, and the final descent.
Section~\ref{sec:lean} describes the Lean formalization, and
Section~\ref{sec:conc} gives conclusions.  The appendices collect the move
ledger, the determinacy and cover certificates, and the soundness details.

\section{Statement and square reduction}
\label{sec:reduction}

Define
\[
T(x)=\frac{x(x+1)}2,\qquad
P(y)=\frac{y(3y+1)}2,\qquad
H(z)=\frac{z(5z+1)}2 .
\]
The target is the following statement.

\begin{theorem}[A287616]\label{thm:A287616}
For every $n\in\N$ there exist $x,y,z\in\N$ such that
\[
  n=T(x)+P(y)+H(z).
\]
\end{theorem}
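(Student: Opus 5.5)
The plan follows the strategy of the introduction, in three stages: a square reduction, a primitive seed, and a descent.

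\emph{Square reduction.} Multiply the target identity by $120$ and add $23$. Using $8T(x)+1=(2x+1)^2$, $24P(y)+1=(6y+1)^2$ and $40H(z)+1=(10z+1)^2$, we get
\[
120n+23=15(2x+1)^2+5(6y+1)^2+3(10z+1)^2 .
\]
Conversely, suppose $m=120n+23=Q(u,v,w)$ with $u$ odd, $v\equiv 1\pmod 6$ and $w\equiv 1\pmod{10}$. Since $Q$ is even in each variable, we may flip signs freely. Replacing $v$ by $-v$ or $w$ by $-w$ would destroy the congruences, so the admissible sets are exactly $v\equiv\pm1\pmod 6$ and $w\equiv\pm1\pmod{10}$, each normalized by a sign choice. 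Note that $y$ must be a nonnegative integer with $6y+1=v$. Here $P(y)$ with negative $y$ gives the other family of pentagonal numbers, but the statement asks for $y\in\N$. Hence the natural side condition is $|v|\equiv1\pmod 6$, and similarly $|w|\equiv1\pmod{10}$, which then recovers $x,y,z\in\N$. So the theorem is equivalent to: for every $m\equiv 23\pmod{120}$ there is a representation $m=Q(u,v,w)$ with $u$ odd, $|v|\equiv 1\pmod 6$ and $|w|\equiv1\pmod{10}$. A quick local check mod $8$, $3$ and $5$ shows which of these conditions are automatic. For instance, $m\equiv 23\pmod{120}$ forces $3\nmid v$ and $5\nmid w$, up to primitivity. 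The genuinely constraining conditions are then the residues of $|v|$ mod $6$ and of $|w|$ mod $10$, together with the parity pattern.

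\emph{Primitive seed.} Next I would show that every $m\equiv 23\pmod{120}$ is primitively represented by $Q$. Locally this is Hensel lifting at $p=2,3,5$ plus the trivial situation at other primes. Globally, the cited genus-theoretic local--global theorem gives a primitive representation by some form in the genus of $Q$. The genus has two classes, $Q$ and $g_2=2x^2-2xy+8y^2+15z^2$. If $m$ is represented by $g_2$, I would transfer the representation to $Q$ by an explicit rational isometry with small denominators, applied to a suitably chosen representation. The point is that a residue-class argument mod $120$ shows that the denominators clear for at least one of the $g_2$-representations.

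\emph{Descent, and the main obstacle.} Starting from the seed, I would apply rational similarities $\sigma$ of $Q$, that is, rational orthogonal maps of determinant $\pm1$ for $Q$, such as reflections in vectors of small $Q$-norm. Each $\sigma$ preserves $m$ and is integral on prescribed residue classes. I would define a potential $\Phi$, lexicographic in the "badness" of the residues of $(u,v,w)$ modulo $2,6,10$ and then in a height. Each bad vector admits a move that stays integral and strictly decreases $\Phi$; since $Q$ is positive definite there are finitely many representations of a fixed $m$, so the descent terminates at a good representation. The hard step is exactly this: exhibiting, for every bad residue and sign pattern, a decreasing move. Away from a thin cone, where one coordinate dominates, an explicit computation works. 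On the cone, I expect to need a finite case analysis of short move-words, showing that their success depends only on signs and residues, discharged by a linear-programming or exhaustive certificate. This residual cone is where I anticipate the real difficulty.
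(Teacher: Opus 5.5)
Your plan follows the paper's architecture closely. It uses the same square reduction to $Q=15u^2+5v^2+3w^2$ with side conditions $|v|\equiv1\pmod 6$, $|w|\equiv1\pmod{10}$. It takes a primitive seed from the cited genus local--global theorem and the two-class genus $\{Q,g_2\}$. It then runs a lexicographic descent by rational isometries of $Q$, whose residual cone is closed by sign/residue determinacy and an exact LP certificate.

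The step that fails as you state it is the mate transfer. The two rational maps $g_2\to g_1$ are integral only when $3\mid x+z$ or $3\mid x-y-z$. Consider the class $3\mid y$, $3\mid z$; there $3\nmid x$, because $2g_2\equiv(x+y)^2\pmod 3$ and $m\equiv2\pmod3$. On this class:
\begin{itemize}
\item both integrality conditions reduce to $3\mid x$ and fail;
\item the integral automorphisms $H(x,y,z)=(x-y,-y,z)$ and $S_z$ preserve the class;
\item no congruence on $m$ excludes it. For instance $g_2(2,0,3)=143=120+23$ is a primitive representation lying in it.
\end{itemize}
So a residue-class argument modulo $120$ cannot show that the denominators clear for some $g_2$-representation of $m$. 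The residues of $m$ say nothing about which classes the finitely many actual representations occupy. What is missing is a mechanism that produces a \emph{new} representation of the same $m$ from a bad one.

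The paper supplies this with the rational isometry $T$ of $g_2$, which has denominator $3$ and is integral on the bad class. If all iterates $T^k t$ stayed bad, finiteness of the representation set would make $t$ periodic. But the lower $2\times2$ block of $T$ has characteristic polynomial $\lambda^2+\frac23\lambda+1$, whose roots are not roots of unity. That forces $y=z=0$, so $m=2x^2$ is even, a contradiction. For $m=143$, one gets $T(2,0,3)=(0,-4,-1)$, and after $S_z$ the second transformation applies. Finally one checks that only powers of $3$ can enter the gcd, and these are excluded because $3\nmid v$ in any $Q$-representation of $m$.

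Your descent paragraph is the paper's route, but it is a programme rather than a proof. The claim that every bad state admits a move or short word strictly lowering $\Phi$ is the entire content of the second half of the argument. In the paper it holds only for specific ingredients:
\begin{itemize}
\item the moves $P_{uv}/2$, $P_{uw}/3$, $P_{vw}/4$ with sign lifts;
\item a potential whose middle component tracks the guaranteed bits and the evenness penalty;
\item the five one-move descents of Lemma~\ref{lem:L1};
\item on the residual cone $K$, words of length up to $5$, certified by finite-key determinacy and the exact-LP cover (FIN-CELL).
\end{itemize}
Nothing in your proposal guarantees that unspecified reflections in small-norm vectors would suffice. That has to be verified for concrete moves, and in the paper that verification is exactly the computer certificate.
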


The nonnegativity of $x,y,z$ is part of the statement recorded in OEIS
A287616.  It is not the ``generalized polygonal'' variant in which the variables
are allowed to range over all of $\Z$; that all-integer variant is a different
result in Sun's work \citep{Sun2015}.

The elementary identities
\[
8T(x)+1=(2x+1)^2,\quad
24P(y)+1=(6y+1)^2,\quad
40H(z)+1=(10z+1)^2
\]
give
\[
120\bigl(T(x)+P(y)+H(z)\bigr)+23
 =15(2x+1)^2+5(6y+1)^2+3(10z+1)^2 .
\]
Thus put
\[
Q(u,v,w)=15u^2+5v^2+3w^2,\qquad A=\operatorname{diag}(15,5,3),
\]
so that $Q(x)=x^tAx$.  We shall prove the following equivalent
representation theorem.

\begin{theorem}[Constrained ternary representation]\label{thm:ternary}
Every integer $m\equiv23\pmod {120}$ has a representation
\[
  m=Q(u,v,w)
\]
with $u$ positive and odd, $v\equiv1\pmod6$, and $w\equiv1\pmod {10}$.
\end{theorem}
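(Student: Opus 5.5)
The plan follows the strategy outlined in the introduction, in two stages: first produce some representation, then move it into the required congruence classes.

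\emph{Stage 1: a primitive seed.} We first show that every $m\equiv 23\pmod{120}$ is primitively represented by $Q$. Locally this is straightforward. At $p=2$, $Q\equiv u^2+v^2+3w^2\pmod 8$ takes the value $7$ at $(1,1,1)$. At $p=3$ and $p=5$, the residue $m\equiv 2\pmod 3$ is hit by $5v^2$, and $m\equiv 3\pmod 5$ is hit by $3w^2$; these are nonsingular residues, so Hensel's lemma lifts them. At primes $p\nmid 30$, $Q$ is unimodular of rank three and is therefore primitively universal. The genus-theoretic local--global theorem then shows that $m$ is primitively represented by some form in the genus of $Q$. Since this genus has two classes, $Q$ and $g_2=2x^2-2xy+8y^2+15z^2$, it remains to handle $m$ represented by $g_2$ and not by $Q$. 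For that case we will write down an explicit rational isometry from $g_2$ into $Q$ with a small denominator, and argue from congruence classes modulo that denominator that some representation (possibly after a sign or automorph change) maps to an integral one. If this transfer is unavailable in general, the fallback is to use the seed only as an existence input: a primitive representation by $Q$ itself.

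\emph{Stage 2: congruence conditions.} Take a representation $(u,v,w)$ with $Q(u,v,w)=m$. Reducing modulo $8$, $3$ and $5$ gives $u$ odd, $3\nmid v$ and $5\nmid w$ automatically, using $m\equiv 7\pmod 8$, $m\equiv2\pmod3$ and $m\equiv3\pmod5$. For instance, mod $8$ we get $15u^2+5v^2+3w^2\equiv 7$, and checking parities shows that all three of $u,v,w$ are odd. By flipping signs we can then arrange $u>0$, $v\equiv 1\pmod 6$ and $w\equiv\pm1\pmod{10}$. Sign changes alone cannot repair $w\equiv\pm 3\pmod{10}$. The remaining obstruction is therefore the class of $w$ modulo $5$ up to sign, together with the compatibility of the $v$ condition.

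To repair this we use rational similarities (automorphs with denominators) of $Q$. These are the three moves from the introduction, for example reflections in vectors $r$ with $Q(r)\mid 2\,B(r,\cdot)$ modulo suitable primes. We apply a move only when the image is integral and has the correct residues, and we choose a lexicographic potential $\Phi$, say $(|w|,|v|,\dots)$, that strictly decreases under an admissible move whenever the current vector is bad. Only finitely many vectors satisfy $Q=m$, so the descent terminates, and it can only stop at a good representation.

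The main obstacle is proving that a decreasing admissible move always exists. Generically, a size comparison shows that one of the three moves decreases $\Phi$. The difficulty is a thin cone of vectors where all single moves fail. On that cone we would show that the success of short move-words depends only on the signs and residues of the coordinates modulo a fixed modulus, which reduces the problem to finitely many cells, and then verify each cell by an explicit rational linear certificate.
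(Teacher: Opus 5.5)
Your outline has the same architecture as the paper: a seed from local data, genus local--global and transfer from $g_2$, then a descent by rational similarities with a finite certificate on a residual cone. But Stage~2 starts from a false claim and misidentifies what has to be repaired. The reduction mod $8$ is $Q\equiv7u^2+5v^2+3w^2$, not $u^2+v^2+3w^2$. The condition $Q\equiv7\pmod 8$ allows the parity patterns $(1,0,0)$, $(0,0,1)$ and $(1,1,1)$. For example $143=Q(1,2,6)=Q(2,4,1)$, both primitive, so neither ``all odd'' nor even ``$u$ odd'' is automatic. The seed may therefore have even coordinates, and the descent must remove them; this is why the paper's transport lemma records the three patterns and its potential $\Phi$ carries the penalty $2\,\mathrm{ev}$.

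Moreover, the class $w\equiv\pm3\pmod{10}$ that you say sign changes cannot repair never occurs, since $3w^2\equiv3\pmod5$ forces $w\equiv\pm1\pmod5$. If the signs of $v,w$ were free, every all-odd representation would already satisfy the congruences after sign flips. The genuine obstruction is positivity: $y=(v-1)/6$ and $z=(w-1)/10$ must be nonnegative. So the target is $|v|\equiv1\pmod6$ and $|w|\equiv1\pmod{10}$, and the bad states $|v|\equiv5\pmod6$ or $|w|\equiv9\pmod{10}$ are exactly those no sign change affects.

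Second, the step that carries all the weight is asserted, not proved: that a $\Phi$-decreasing word exists at every primitive non-good state. You never fix the moves, their denominators or their definedness conditions, and ``a size comparison shows'' is unsupported. Requiring each move's image to already have the correct residues demands a one-step repair, which is precisely what is unavailable on the residual cone. The paper needs:
\begin{itemize}
\item the specific matrices $P_{uv},P_{uw},P_{vw}$ with denominators $2,3,4$;
\item a potential whose leading components are the badness bits;
\item the five one-move descents of Lemma~\ref{lem:L1}, under explicit congruence and window hypotheses;
\item on the cone $K$, escape words of length up to $4$, certified by the determinacy lemma and the exact-LP cover.
\end{itemize}
Your last sentence restates this plan without supplying any of it.

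Stage~1 also has a hole at its only nontrivial point. You leave open the residue class in which no integral transformation from $g_2$ applies. Your fallback is circular: if $m$ were primitively represented only by $g_2$, there would be no $Q$-seed to fall back on. The paper closes this with three ingredients:
\begin{itemize}
\item the two integral maps, valid when $3\mid x+z$ or $3\mid x-y-z$;
\item the automorphs $H,S_z$;
\item on the bad class $3\mid y,\ 3\mid z$, iteration of the rational isometry $T$ of $g_2$.
\end{itemize}
If the iterates stayed bad, finiteness of the representation set would force a periodic bad vector. The non-root-of-unity eigenvalues of the lower $2\times2$ block of $T$ force such a vector to have $y=z=0$, giving the even value $m=2x^2$, a contradiction.
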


\begin{proof}[Equivalence of Theorems~\ref{thm:A287616} and~\ref{thm:ternary}]
If $n=T(x)+P(y)+H(z)$, then $m=120n+23$ is represented by
$(u,v,w)=(2x+1,6y+1,10z+1)$, and these coordinates satisfy the displayed
positivity and congruence conditions.  Conversely, if $m=120n+23=Q(u,v,w)$ with
the displayed conditions, then
\[
  x=\frac{u-1}{2},\qquad y=\frac{v-1}{6},\qquad z=\frac{w-1}{10}
\]
are nonnegative integers, and substituting into the three square identities
recovers $n=T(x)+P(y)+H(z)$.  Because $Q$ is diagonal, changing signs of
$u,v,w$ does not change $Q$; in the descent below it is therefore enough to
reach $|v|\equiv1\pmod6$ and $|w|\equiv1\pmod {10}$ after normalization.
\end{proof}

We call a representation \emph{primitive} if $\gcd(u,v,w)=1$.  We call its
normalized state $(|u|,|v|,|w|)$ \emph{good} if all three coordinates are odd,
$|v|\equiv1\pmod6$, and $|w|\equiv1\pmod {10}$.  The proof first obtains a
primitive representation of $m$ and then lowers a lexicographic potential until
a good state is reached.  Equivalently, among the finitely many primitive
representations of a fixed $m$ the descent locates one whose normalized state is
good; by the equivalence above this yields the desired representation of $n$.

\section{The primitive seed}
\label{sec:seed}

\subsection{Local primitive representability}

We begin with the local input.  Let $B(x,y)$ be the polar form of $Q$:
\[
B(x,y)=15x_1y_1+5x_2y_2+3x_3y_3.
\]

\begin{lemma}[Local primitive representation]\label{lem:L0a}
Let $m\equiv23\pmod {120}$.  For every prime $p$, there is
$x_p\in\Zp^3$ such that $Q(x_p)=m$ and at least one coordinate of $x_p$ is a
$p$-adic unit.  The form also represents $m$ over $\R$.
\end{lemma}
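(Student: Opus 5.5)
The plan is to split into the real place, the primes $p\nmid 2\cdot3\cdot5$, and the three bad primes $p=2,3,5$. Over $\R$ the form is positive definite and $m>0$, so $(\sqrt{m/15},0,0)$ works.

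\emph{Good primes $p\nmid30$.} All three coefficients of $Q$ are $p$-adic units. A nondegenerate ternary form over $\Fp$ represents every element of $\Fp$, including $0$ nontrivially, so there is $\bar x\in\Fp^3$ with $Q(\bar x)\equiv m\pmod p$ and $\nabla Q(\bar x)=2A\bar x\not\equiv0$. When $m\not\equiv0\pmod p$ this holds automatically, since $\bar x\ne0$ and $2A$ is invertible mod $p$. When $p\mid m$, we take a nonzero isotropic vector, whose gradient is again nonzero. Hensel's lemma for the polynomial $Q-m$ then lifts $\bar x$ to $x_p\in\Zp^3$. Some coordinate of $\bar x$ is nonzero, so $x_p$ has a unit coordinate.

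\emph{Primes $3$ and $5$.} Here I would exhibit explicit residues rather than argue abstractly. Since $m\equiv23\pmod{120}$, we have $m\equiv2\pmod3$ and $m\equiv3\pmod5$.

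For $p=3$, reducing gives $Q\equiv5v^2\equiv2v^2\pmod3$. Taking $v$ a unit gives $Q\equiv2\equiv m\pmod 3$. The partial derivative $10v$ is a $3$-adic unit, so Hensel in the variable $v$ alone lifts this. Concretely, fix $u=0$ and $w=0$ and solve $5v^2=m$ in $\Z_3$: the ratio $m/5\equiv1\pmod3$ is a square. To keep the argument uniform, I would instead fix any $u,w\in\Z_3$ and solve for $v$ by Hensel, which works because $(m-15u^2-3w^2)/5\equiv1\pmod3$.

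For $p=5$, reducing gives $Q\equiv3w^2\pmod5$. We need $w^2\equiv m/3\equiv1\pmod5$, so $w=1$ works, and $\partial_wQ=6w$ is a unit. Hensel in $w$ then gives $x_5$ with $w$ a unit.

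\emph{Prime $2$.} This is the step I expect to need the most care, since Hensel must be run modulo $8$. We have $m\equiv23\equiv7\pmod8$. Take $u,v,w$ all odd: then $Q\equiv15+5+3=23\equiv7\pmod8$. Fix $v=w=1$ and solve $15u^2=m-8$ in $\Z_2$. This is possible because $(m-8)/15\equiv(7-8)\cdot15^{-1}\equiv-1\cdot7\equiv1\pmod8$, hence is a $2$-adic unit square. The same conclusion follows from the fact that a unit $a$ is a square in $\Z_2$ iff $a\equiv1\pmod8$. The resulting $u$ is a unit.

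In every case we obtain $x_p$ with a unit coordinate, which proves the lemma. Each case reduces to solving a single diagonal equation, and the only real checking is the residue arithmetic modulo $8$, $3$ and $5$ dictated by $m\equiv23\pmod{120}$.
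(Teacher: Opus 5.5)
Your proof is correct, and its overall structure matches the paper's. You treat the real place via $(\sqrt{m/15},0,0)$, take one-coordinate square roots at $p=3$ and $p=5$ (exactly as the paper does), use a mod-$8$ square-root argument at $p=2$, and Hensel-lift at $p\nmid30$.

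At $p=2$ you solve $15u^2=m-8$ with $v=w=1$, using the criterion that a $2$-adic unit $\equiv1\pmod8$ is a square. The paper instead solves $3w^2=m-20$ with $u=v=1$, via the strong Hensel form with $v_2(f(t_0))\ge3$ and $v_2(f'(t_0))=1$. The two are interchangeable.

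The genuine difference is the case $p\nmid30$, $p\mid m$. The paper splits this off, because its binary counting on $15u^2+5v^2$ can give only the trivial pair when $p\mid m$. It then lifts an isotropic vector $e$ to $\Zp^3$, builds a hyperbolic partner $f$ with $B(e,f)=1$, and sets $x=\frac m2 e+f$, reading off primitivity from $B(e,x)=1$.

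You instead observe that a nonzero isotropic vector modulo $p$ is already a nonsingular zero of $Q-m$ modulo $p$. Its gradient $2A\bar x$ is nonzero because $2A$ is invertible mod $p$. So ordinary Hensel lifting gives $Q(x_p)=m$ with a unit coordinate, whatever $v_p(m)$ is. This yields one uniform argument for all $p\nmid30$ and shows the high-valuation case needs no extra machinery.

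The paper's construction is the standard lattice-theoretic one: a unimodular hyperbolic plane primitively represents every $p$-adic integer, with the explicit certificate $B(e,x)=1$. For this lemma, though, your route is shorter and equally rigorous. You should still cite or prove the standard fact that a nondegenerate ternary form over $\Fp$ represents every residue and has a nontrivial zero, e.g.\ via Chevalley--Warning together with the paper's binary counting argument.
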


\begin{proof}
We use two standard elementary facts, recalled for completeness.
First, we use the following strong Hensel form, in the valuation-theoretic
version of \citet[][Ch.~II]{Serre1973}.  If $f\in\Zp[t]$ and
$t_0\in\Zp$ satisfy
\[
  v_p(f(t_0))\ge2k+1,\qquad v_p(f'(t_0))=k,
\]
then there is $t\in\Zp$ with $f(t)=0$ and
$t\equiv t_0\pmod {p^{k+1}}$.  Indeed, Newton iteration
$t_{j+1}=t_j-f(t_j)/f'(t_j)$ is valid because the derivative valuation remains
$k$; Taylor's formula
$f(X+H)=f(X)+f'(X)H+H^2g(X,H)$ shows the error valuation increases, while the
increments have valuations tending to infinity.  Completeness of $\Zp$ gives
the limit.

Second, for odd $p\nmid ab$, the binary form $ax^2+by^2$ takes every residue
class modulo $p$.  The two sets $\{ax^2:x\in\Fp\}$ and
$\{c-by^2:y\in\Fp\}$ both have size $(p+1)/2$ in the field $\Fp$, so they
intersect.

Now treat the primes.  Since $m\equiv23\pmod {120}$, we have
\[
  m\equiv7\pmod8,\qquad m\equiv2\pmod3,\qquad m\equiv3\pmod5.
\]
At $p=2$, set $f(t)=3t^2-(m-20)$ and start at $t_0=1$.  Then
$f(1)=23-m$ is divisible by $8$, while $f'(1)=6$ has $2$-adic valuation $1$.
The Hensel statement with $k=1$ gives a $2$-adic unit $w$ satisfying
$3w^2=m-20$, hence $Q(1,1,w)=m$.

At $p=3$, since $m/5\equiv1\pmod3$, Hensel lifting a square root of $m/5$
gives a unit $v\in\Z_3^\times$ with $5v^2=m$, so $Q(0,v,0)=m$.  At $p=5$,
since $m/3\equiv1\pmod5$, the same argument gives a unit
$w\in\Z_5^\times$ with $3w^2=m$, so $Q(0,0,w)=m$.

Let $p\nmid30$.  If $p\nmid m$, the binary value-counting fact applied to
$15u^2+5v^2$ gives a nonzero pair $(u_0,v_0)$ modulo $p$ with
$15u_0^2+5v_0^2\equiv m\pmod p$.  At a nonzero coordinate the derivative with
respect to that coordinate is a unit modulo $p$, so Hensel lifting corrects the
chosen coordinate and gives a primitive $p$-adic representation.

It remains to handle $p\nmid30$ with $p\mid m$, including arbitrarily high
$p$-adic valuation.  The binary value-counting fact with target $-3$ gives
$u_0,v_0$ such that
\[
  15u_0^2+5v_0^2+3\equiv0\pmod p.
\]
Thus $Q$ has a nontrivial isotropic vector modulo $p$ with third coordinate
$1$, and Hensel lifting gives a primitive isotropic vector
$e\in\Zp^3$ with $Q(e)=0$ and with third coordinate a unit.  Let
$e_3=(0,0,1)$.  Then $B(e,e_3)=3e_z$ is a $p$-adic unit.  Put
$f_0=(B(e,e_3))^{-1}e_3$, so $B(e,f_0)=1$, and define
\[
  f=f_0-\frac12Q(f_0)e .
\]
Because $Q(e)=0$ and $B(e,f_0)=1$, this gives $Q(f)=0$ and $B(e,f)=1$.  Now
\[
  x=\frac m2 e+f
\]
satisfies $Q(x)=m$: the two isotropic terms vanish and
$2\cdot(m/2)B(e,f)=m$.  Moreover $B(e,x)=1$, so $x$ cannot be divisible by
$p$ in $\Zp^3$.  Hence the representation is primitive at $p$.

Finally $Q$ is positive definite over $\R$, and $m>0$, so
$(\sqrt{m/15},0,0)$ is a real representation.
\end{proof}

\subsection{Genus local-global}

The genus of $Q$ has two classes.  It is convenient to use the coordinate
permutation
\[
g_1(X,Y,Z)=3X^2+5Y^2+15Z^2,\qquad Q(u,v,w)=g_1(w,v,u),
\]
and the mate
\[
g_2(x,y,z)=2x^2-2xy+8y^2+15z^2 .
\]
We use the following classical genus fact: the genus of $g_1$ consists of exactly
the two classes represented by $g_1$ and $g_2$.  This is a standard finite genus
computation for a ternary form of determinant $225$; it is stated for this form
in \citet[][\S5, proof of Thm.~1.11(i)]{Sun2015} and
\citet[][\S5, proof of Thm.~1.4(iii)(b)]{WuSun2018}, and we use their result
directly.

\begin{lemma}[Primitive genus local-global]\label{lem:L0b}
For every $m\equiv23\pmod {120}$, either $Q$ primitively represents $m$ over
$\Z$, or $g_2$ primitively represents $m$ over $\Z$.
\end{lemma}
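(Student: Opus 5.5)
The plan is to combine Lemma~\ref{lem:L0a} with the classical primitive local--global principle for genera and the two-class genus fact cited above. First I would record the standard statement, for instance in the form found in Cassels' \emph{Rational Quadratic Forms} or O'Meara: if a positive definite integral ternary form $f$ primitively represents $m$ over $\Zp$ for every prime $p$ (and over $\R$), then some form in the genus of $f$ primitively represents $m$ over $\Z$. This statement is the Hasse--Minkowski theorem together with Witt's theorem and strong approximation at the level of lattices. Lemma~\ref{lem:L0a} supplies exactly the local hypotheses for $Q$, hence for $g_1$, which differs only by a coordinate permutation.

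Next, I would apply the genus fact from \citet{Sun2015} and \citet{WuSun2018}: the genus of $g_1$ consists of the classes of $g_1$ and $g_2$. So the form produced in the first step is $\Z$-equivalent to $g_1$ or to $g_2$. Equivalence by a matrix in $GL_3(\Z)$ preserves primitivity of a represented vector, because $\gcd$ of the coordinates is invariant under unimodular change. Therefore either $g_1$ or $g_2$ primitively represents $m$. In the first case, permuting coordinates back gives a primitive representation by $Q$.

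For completeness I would also check directly that $g_2$ lies in the genus of $g_1$. This needs $\det g_2=225=\det g_1$, computed on the Gram matrix $\begin{psmallmatrix}2&-1&0\\-1&8&0\\0&0&15\end{psmallmatrix}$, and local equivalence at $p=2,3,5$. The point $p=2$ needs care because $2x^2-2xy+8y^2$ is even. Since the paper uses the cited result directly, I would only note this as a consistency check.

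The main obstacle is stating the local--global principle in its \emph{primitive} form. The usual Hasse--Minkowski statement gives only a rational representation. Passing from "primitively represented everywhere locally" to "primitively represented by some lattice in the genus" requires the lattice-theoretic argument: extend the local primitive vectors to local lattices isometric to the given one, then glue them via a global vector $x$ with $Q(x)=m$ and the approximation theorem. I would cite this as the standard theorem (O'Meara, \S102) rather than reprove it. This is the one external input, consistent with the paper's declaration that the genus local--global statement is cited.
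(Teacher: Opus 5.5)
Your proposal is correct and follows the paper's route: you feed Lemma~\ref{lem:L0a} into the primitive genus local--global principle (Hasse--Minkowski gives a rational vector of norm $m$, Witt isometries fix the finitely many problematic primes, and the local lattices glue into a lattice of the genus in which that vector is primitive; O'Meara \S102). You then invoke the cited two-class genus fact, using that unimodular equivalence preserves primitivity. One small correction: no (strong) approximation theorem is needed at this level, since the gluing uses only that a family of local lattices equal to $L_p$ at almost all primes determines a global lattice; approximation would enter only when passing from the genus to the spinor genus or class.
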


\begin{proof}
Let $V=(\Q^3,Q)$ and let $L=\Z^3$ be the lattice of $Q$.  By
Lemma~\ref{lem:L0a}, every localization $L_p=L\otimes\Zp$ contains a primitive
vector $x_p$ of norm $m$, and $Q$ represents $m$ over $\R$.  By the
Hasse--Minkowski theorem \citep[][Ch.~IV]{Serre1973}\citep[][Ch.~6]{CasselsQuadratic}
there is a rational vector $r\in V$ with $Q(r)=m$.

Only finitely many primes are problematic for the vector $r$: those for which
$r\notin L_p$ or $r\in pL_p$.  For such a prime $p$, the vectors $x_p$ and $r$
have the same nonzero norm in the quadratic space $V_p=V\otimes\Qp$.  Witt's
extension theorem \citep[][\S42]{OMeara1973}\citep[][Ch.~2]{CasselsQuadratic}
gives an isometry $\sigma_p\in O(V_p)$ carrying $x_p$ to
$r$.  Define local lattices
\[
  J_p=\sigma_p(L_p)
\]
at the finitely many problematic primes and $J_p=L_p$ elsewhere.  Their
intersection
\[
  M=\{x\in V:x\in J_p\text{ for every prime }p\}
\]
is a global lattice: after multiplying by a suitable integer $t$, one has
$tL\subseteq M\subseteq t^{-1}L$.  By construction the localizations of $M$ are
isometric to the localizations of $L$, so $M$ lies in the genus of $Q$.  The
vector $r$ lies in every $J_p$, hence in $M$, and is primitive in $M$ because
$x_p$ was primitive in $L_p$ and $\sigma_p(pL_p)=pJ_p$.  This construction is the
primitive form of the genus local--global principle for representations
\citep[][102:5]{OMeara1973}\citep[][Ch.~9]{CasselsQuadratic}; for the present
positive-definite ternary lattice no spinor obstruction occurs at the genus
level.

Thus $m$ is primitively represented by some class in the genus of $Q$.  Under
the permutation $Q\cong g_1$, the genus computation cited above says that the only
possibilities are $g_1$ and $g_2$.  Therefore $m$ is primitively represented by
$Q$ or by $g_2$.
\end{proof}

\subsection{Mate transfer}

\begin{lemma}[Transfer from the mate]\label{lem:L0c}
If $g_2$ primitively represents $m\equiv23\pmod {120}$, then $Q$ primitively
represents $m$.
\end{lemma}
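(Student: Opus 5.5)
The plan is to construct explicit rational isometries from $g_2$ to $g_1$ (equivalently to $Q$, after the permutation $Q(u,v,w)=g_1(w,v,u)$). Each map will have denominator a power of $2$. We then use the congruence $m\equiv23\pmod{120}$, in particular $m$ odd, to choose among finitely many such maps one that sends the given integer vector to an integer vector.

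\emph{Step 1: the common rational structure.} Both forms split off the same summand $15z^2$. Their binary parts $3X^2+5Y^2$ and $2x^2-2xy+8y^2$ both have discriminant $-60$ and lie in the same genus. Completing the square gives
\[
  2\,g_2(x,y,z)=(2x-y)^2+15y^2+30z^2,
\]
and similarly
\[
  3\,g_1(X,Y,Z)=(3X)^2+15Y^2+45Z^2 .
\]
Both right-hand sides are norm-type forms built from $a^2+15b^2=N(a+b\sqrt{-15})$. Using multiplicativity of norms together with the fact that $2$ and $3$ are norms from $\Q(\sqrt{-15})$ up to the appropriate factors, I would write down a rational matrix $S\in GL_3(\Q)$ with $S^tA_{g_1}S=A_{g_2}$ and $4S$ (or $8S$) integral. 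Such an $S$ is found by a short search, and the identity $g_1(Sx)=g_2(x)$ is then a polynomial identity that one verifies directly. Composing $S$ with the integral automorphs of $g_1$ (sign changes) and of $g_2$ (e.g.\ $(x,y,z)\mapsto(y-x,y,z)$, $(x,y)\mapsto(x-y,-y)$, $z\mapsto -z$) produces a small family $S_1,\dots,S_k$ of such isometries.

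\emph{Step 2: parity case analysis.} Suppose $m=g_2(x,y,z)$ is a primitive representation. Since $m$ is odd and $2x^2-2xy+8y^2$ is even, $z$ must be odd. Reducing further modulo $8$ (where $m\equiv7$) gives constraints on $(x,y)\bmod 4$. For each residue class of $(x,y,z)$ modulo $4$ (or $8$) compatible with these constraints, I would exhibit one $S_i$ for which $S_i(x,y,z)^t\in\Z^3$. This is a finite check: the numerators of $S_i(x,y,z)$ are fixed integer linear forms that must vanish modulo the denominator. The check will be recorded as a table.

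\emph{Step 3: primitivity.} Let $d=\gcd$ of the coordinates of $(X,Y,Z)=S_i(x,y,z)$. Then $d^2\mid m$. The inverse $S_i^{-1}$ also has power-of-$2$ denominator, and $2^eS_i^{-1}$ is integral for some $e$, so $d\mid 2^e\gcd(x,y,z)=2^e$. Since $m$ is odd, $d$ is odd, hence $d=1$. Permuting coordinates back to $Q$ finishes the proof.

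The main obstacle is Step 1 combined with Step 2: we need a family of isometries whose integrality conditions together cover every admissible residue class. If a single denominator-$2$ family leaves some class uncovered, I would first try enlarging the family by composing with more automorphs of $g_2$. Failing that, I would allow denominator $4$ and use the full information $m\equiv7\pmod8$ to eliminate the remaining classes.
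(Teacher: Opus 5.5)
Your route is correct, and it is genuinely different from the paper's and shorter. The paper works at the prime $3$. Its two transformations to $g_1$ have denominator $3$, and the involutions $H,S_z$ handle every residue class mod $3$ except $3\mid y$, $3\mid z$. That class needs an unbounded walk: iterate the rational automorph $T$ of $g_2$, and rule out staying bad forever using finiteness of the representation set, the non-root-of-unity eigenvalues, and oddness of $m$. Primitivity is then recovered from $3\nmid v$.

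At the prime $2$ the obstacle you flag does not arise. The identity
\[
g_1\bigl(x/2+y,\; x/2-y,\; z\bigr)=2x^2-2xy+8y^2+15z^2=g_2(x,y,z)
\]
gives $S\in GL_3(\Z[1/2])$ with $S^{-1}(X,Y,Z)=(X+Y,(X-Y)/2,Z)$. It is integral on vectors with $x$ even. Your mod-$8$ reduction shows that $x$ odd, $y$ even forces $m\equiv1,5\pmod 8$. So for $m\equiv7\pmod8$, either $x$ is even, or $x\equiv y\equiv1\pmod2$, in which case you first apply $H(x,y,z)=(x-y,-y,z)$. Thus one map and one automorph cover every admissible class. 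Your Step~3 then finishes: a common divisor $d$ of the image divides $2\gcd(x,y,z)$, and $d$ is odd since $d^2\mid m$.

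The trade-off is this. The paper uses only $m\equiv2\pmod3$ and $m$ odd, but pays with the escape argument. You use the full $2$-adic condition $m\equiv7\pmod8$, which for odd $m$ is exactly what makes the orthogonal complement of the representing vector isotropic at $2$, and you get a bounded one-step transfer.

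One side remark in your Step~1 is wrong: $3X^2+5Y^2$ and $2x^2-2xy+8y^2$ are not in the same genus over $\Z$, since the second is imprimitive and takes only even values. They are only equivalent over $\Z[1/2]$, which is all your construction actually uses.
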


\begin{proof}
We first show how to pass from many $g_2$-representations to
$g_1$-representations.  Direct expansion gives the identities
\[
g_1\left(\frac{x-5z}{3}-y,\,-y+z,\,-\frac{x+z}{3}\right)=g_2(x,y,z)
\]
and
\[
g_1\left(\frac{x-y-z}{3}+y+2z,\,y-z,\,\frac{x-y-z}{3}\right)=g_2(x,y,z).
\]
The first transformation is integral when $3\mid x+z$, and the second is
integral when $3\mid x-y-z$.

The form
\[
g_2(x,y,z)=x^2+(x-y)^2+7y^2+15z^2
\]
is positive definite, so for fixed $m$ the set of integral representations is
finite.  The involutions
\[
H(x,y,z)=(x-y,-y,z),\qquad S_z(x,y,z)=(x,y,-z)
\]
preserve $g_2$.  Reducing $2g_2$ modulo $3$ gives
\[
2g_2(x,y,z)\equiv(x+y)^2\pmod3.
\]
Since $m\equiv2\pmod3$, every representation has $3\nmid x+y$.

Call a representation bad if $3\mid y$ and $3\mid z$.  If a representation is
not bad, a finite check modulo $3$ using the two involutions $H$ and $S_z$
brings it in at most two steps to a residue class satisfying either
$3\mid x+z$ or $3\mid x-y-z$, so one of the two displayed transformations gives
an integral $g_1$-representation.

It remains to escape the bad class.  On that class use the rational isometry
\[
T=\begin{psmallmatrix}
1&-2/3&-2/3\\
0&-1/3&-4/3\\
0&2/3&-1/3
\end{psmallmatrix}.
\]
Both $3T$ and $3T^{-1}$ are integral, so $T$ maps bad integral vectors to
integral vectors and preserves the finite set of representations of $m$.  If
the iterates of a bad vector under $T$ stayed bad forever, finiteness would give
$T^k t=t$ for some $k>0$.  The lower $2\times2$ block has characteristic
polynomial $\lambda^2+\frac23\lambda+1$, whose eigenvalues are not roots of
unity; the only periodic vector in the bad subspace is therefore forced to have
$y=z=0$.  Then $g_2(x,0,0)=2x^2$ is even, contradicting the oddness of
$m\equiv23\pmod {120}$.  Hence one eventually leaves the bad class and obtains
an integral $g_1$-representation.

Finally check primitivity.  The involutions preserve the gcd.  The isometry
$T$ and the integral transformations to $g_1$ can introduce only powers of
$3$ into the gcd, because all denominators are powers of $3$.  After permuting
from $g_1$ to $Q$, any common factor $3$ would divide the $v$-coordinate.
But a $Q$-representation of $m\equiv2\pmod3$ satisfies
$5v^2\equiv m\equiv2\pmod3$, so $v\not\equiv0\pmod3$.  Thus no factor $3$
remains, and the resulting $Q$-representation is primitive.
\end{proof}

\begin{theorem}[Primitive seed]\label{thm:seed}
Every $m\equiv23\pmod {120}$ has a primitive integral representation by
$Q=15u^2+5v^2+3w^2$.
\end{theorem}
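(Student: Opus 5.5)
The proof combines the two preceding lemmas directly; no new input is needed. Fix $m\equiv23\pmod{120}$. By Lemma~\ref{lem:L0b}, which packages the local primitive representability of Lemma~\ref{lem:L0a}, the Hasse--Minkowski theorem, Witt extension and the cited two-class genus fact, at least one of two cases holds. Either $Q$ primitively represents $m$ over $\Z$, or the genus mate $g_2=2x^2-2xy+8y^2+15z^2$ does.

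In the first case there is nothing further to prove. In the second case, Lemma~\ref{lem:L0c} applies verbatim, since its only hypotheses are that $m\equiv23\pmod{120}$ and that $g_2$ primitively represents $m$. It produces a primitive integral $Q$-representation of $m$. It does so by using the involutions $H$ and $S_z$ and the rational isometry $T$ to reach a residue class where one of the two explicit maps $g_2\to g_1$ is integral, then permuting $g_1(w,v,u)=Q(u,v,w)$. Primitivity survives because any gcd introduced is a power of $3$, which is excluded by $5v^2\equiv m\equiv2\pmod 3$. Combining the two cases gives the theorem.

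Within the statement itself there is no real obstacle, since it is a two-case disjunction closed by the earlier lemmas. The substantive difficulty lies upstream. The first point is the genus local--global step, namely that a primitive local representation everywhere yields a primitive global representation by some class in the genus, together with the identification of that genus as $\{g_1,g_2\}$; this is taken from the literature. The second is the escape from the bad class $3\mid y,\ 3\mid z$ in Lemma~\ref{lem:L0c}, which relies on finiteness of the representation set and the non-periodicity of $T$.

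For the write-up, I would only make sure that ``primitive'' means the same thing in both lemmas, namely $\gcd=1$ of the integer coordinates. I would also note that the coordinate permutation $Q\cong g_1$ preserves primitivity, so that the output of Lemma~\ref{lem:L0c} is literally a primitive $(u,v,w)$ with $Q(u,v,w)=m$, as Theorem~\ref{thm:seed} requires.
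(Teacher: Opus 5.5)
Your proposal is correct and follows the paper's proof exactly: Lemma~\ref{lem:L0b} gives a primitive representation of $m$ by $Q$ or by $g_2$, and in the second case Lemma~\ref{lem:L0c} turns it into a primitive $Q$-representation. Your extra remarks, that the permutation $Q\cong g_1$ preserves primitivity and that the real content lies upstream in the genus input and the escape from the bad class, are accurate but not needed for this step.
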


\begin{proof}
By Lemma~\ref{lem:L0b}, $m$ is primitively represented by $Q$ or by $g_2$.  If
$Q$ already represents $m$ primitively, there is nothing to prove.  Otherwise
$g_2$ represents $m$ primitively, and Lemma~\ref{lem:L0c} transfers that
representation to $Q$.
\end{proof}

\section{Move calculus and the descent potential}
\label{sec:moves}

Having secured a primitive representation of $m$, we now develop the descent that
drives it toward a good state.  The descent uses three integer matrices:
\[
P_{uv}=\begin{psmallmatrix}-1&-1&0\\-3&1&0\\0&0&2\end{psmallmatrix},\qquad
P_{uw}=\begin{psmallmatrix}-2&0&-1\\0&-3&0\\5&0&-2\end{psmallmatrix},\qquad
P_{vw}=\begin{psmallmatrix}-4&0&0\\0&-1&-3\\0&5&-1\end{psmallmatrix}.
\]
They are applied with denominators $2$, $3$, and $4$, respectively.  A move
$(P,d)$ is defined at $x\in\Z^3$ if $Px$ is divisible coordinatewise by $d$,
and the output is $Px/d$.  We also allow sign lifts before a move; since
$Q$ is diagonal, sign lifts preserve $Q$.

\begin{lemma}[Transport law]\label{lem:transport}
Let $m\equiv23\pmod {120}$ and let $x=(u,v,w)$ represent $m$ by $Q$.
\begin{enumerate}
\item The matrices satisfy
\[
P_{uv}^{t}AP_{uv}=4A,\qquad P_{uw}^{t}AP_{uw}=9A,\qquad
P_{vw}^{t}AP_{vw}=16A.
\]
Every defined move preserves $Q(x)=m$.
\item The definedness criteria are
\[
P_{uv}:u\equiv v\pmod2,\qquad
P_{uw}:u\equiv w\pmod3,\qquad
P_{vw}:v\equiv w\pmod4.
\]
\item A defined move preserves $\gcd(u,v,w)$.
\item Every representation of $m$ has $3\nmid v$, $5\nmid w$, and parity
pattern
\[
(u,v,w)\equiv(1,0,0),(0,0,1),\text{ or }(1,1,1)\pmod2.
\]
\item The moves are reversible up to coordinate signs; specifically
$d^2P^{-1}=SPS'$ for suitable diagonal sign matrices $S,S'$.
\end{enumerate}
\end{lemma}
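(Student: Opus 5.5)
The plan is to verify the five items in the order (1), (2), (4), (5), (3). Each is a direct computation with the three explicit matrices, and item (3) is the only one that needs the earlier items.

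For (1), I would compute $P^tAP$ column by column. If $c_1,c_2,c_3$ are the columns of $P$, then the $(i,j)$ entry of $P^tAP$ is $B(c_i,c_j)$. For $P_{uv}$ the columns are $(-1,-3,0)$, $(-1,1,0)$ and $(0,0,2)$. Their $Q$-values are $60,20,12$, which equal $4\cdot(15,5,3)$, and the only nontrivial cross term is $B(c_1,c_2)=15-15=0$. The same check for $P_{uw}$ gives $Q$-values $135,45,27$ and $B(c_1,c_3)=30-30=0$. For $P_{vw}$ it gives $240,80,48$ and $B(c_2,c_3)=15-15=0$. Hence $Q(Px)=d^2Q(x)$, so $Q(Px/d)=Q(x)=m$ whenever the move is defined.

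For (2), I would write out the images:
\[
P_{uv}x=(-u-v,\,-3u+v,\,2w),\qquad
P_{uw}x=(-2u-w,\,-3v,\,5u-2w),\qquad
P_{vw}x=(-4u,\,-v-3w,\,5v-w).
\]
Reducing these modulo $2$, $3$ and $4$ respectively shows that all coordinates vanish exactly under the stated congruence. For example, for $P_{uw}$ the first and third coordinates are $\equiv u-w$ and $\equiv 2(u-w)\pmod 3$.

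For (4), I would use the reductions of $m$ modulo $3$, $5$ and $8$. Modulo $3$ we get $2v^2\equiv2$, so $3\nmid v$. Modulo $5$ we get $3w^2\equiv3$, so $5\nmid w$. Modulo $2$ we get $u+v+w\equiv1$, which leaves the four patterns $(1,0,0)$, $(0,1,0)$, $(0,0,1)$ and $(1,1,1)$. The pattern $(0,1,0)$ is excluded modulo $8$: with $u,w$ even, $15u^2+3w^2\equiv0$ or $4$, and $5v^2\equiv5$, so $Q\equiv5$ or $1\not\equiv7\pmod8$.

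For (5), item (1) gives $d^2P^{-1}=A^{-1}P^tA$, whose $(i,j)$ entry is $P_{ji}A_{jj}/A_{ii}$. I would compute this explicitly for each of the three matrices and compare it with $P$. For $P_{uv}$ the result is $P_{uv}$ itself. For the other two, I expect it to differ from $P$ only by sign patterns that factor as $SPS'$ with diagonal $\pm1$ matrices, and I would exhibit $S,S'$ explicitly.

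Item (3) is the step that needs a genuine argument, and I expect it to be the main obstacle because dividing by $d$ could a priori change the gcd. Let $y=Px/d$. For a prime $p\nmid d$, if $p^k\mid x$ then $p^k\mid Px$, hence $p^k\mid y$. Conversely, (5) gives $dx=d^2P^{-1}y=SPS'y$, so $p^k\mid y$ implies $p^k\mid x$. So the $p$-parts of $\gcd(x)$ and $\gcd(y)$ agree for $p\nmid d$. For $p\mid d$, that is $p\in\{2,3\}$, I would invoke (4) for both $x$ and $y$, which are both representations of $m$. Since $m$ is odd, not all coordinates are even, and $3\nmid v$. Hence neither gcd is divisible by $2$ or $3$, and therefore the two gcds are equal.
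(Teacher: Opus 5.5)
Your proposal is correct and follows essentially the same route as the paper: direct matrix computations for (1), (2) and (5), residue reductions for (4), and for (3) the two divisibilities $Px=dy$ and $dx=SPS'y$, combined with the fact from (4) that no gcd of a representation of $m$ is divisible by $2$ or $3$. The differences are cosmetic. You obtain $d^2P^{-1}=A^{-1}P^tA$ directly from (1) rather than by multiplying matrices, and you exclude the pattern $(0,1,0)$ modulo $8$ where modulo $4$ already suffices. You also state explicitly that the reverse divisibility needs $\gcd(y)$ to be coprime to $d$, since $y$ also represents $m$; the paper leaves this point implicit.
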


\begin{proof}
The three quadratic identities in item 1 are direct products.  If $Px=dy$,
then $Q(y)=d^{-2}Q(Px)=Q(x)$.  Item 2 follows from
\[
P_{uv}x=(-u-v,-3u+v,2w),
\]
\[
P_{uw}x=(-2u-w,-3v,5u-2w),
\]
\[
P_{vw}x=(-4u,-v-3w,5v-w).
\]
For the gcd statement, let $g=\gcd(x)$.  Because $m\equiv23\pmod {120}$, no
prime dividing $2$ or $3$ can divide all three coordinates, so $g$ is prime to
the denominators $2,3,4$.  The equality $Px=dy$ shows $g\mid y$.  Applying the
inverse formula, which has the same denominator up to signs, gives the reverse
divisibility and hence equality of gcds.

For the residue restrictions, reducing $Q(x)=m$ modulo $3$ gives
$2v^2\equiv2$, hence $3\nmid v$.  Reducing modulo $5$ gives
$3w^2\equiv3$, hence $5\nmid w$.  Reducing modulo $4$ gives
$3u^2+v^2+3w^2\equiv3$; checking the three square residues $0,1$ gives exactly
the three displayed parity patterns.  The inverse formulas are again verified
by multiplying the matrices; for $P_{uv}$ it is its own inverse up to the
factor $4$, and for the other two a sign on the $w$-coordinate supplies the
same relation.
\end{proof}

For a normalized state $x=(u,v,w)$ define
\[
\bv(x)=
\begin{cases}
0,&u,v,w\text{ all odd and }v\equiv1\pmod6,\\
1,&\text{otherwise,}
\end{cases}
\qquad
\bw(x)=
\begin{cases}
0,&u,v,w\text{ all odd and }w\equiv1\pmod {10},\\
1,&\text{otherwise.}
\end{cases}
\]
Let $\mathrm{ev}(x)=0$ if $u,v,w$ are all odd and $1$ otherwise, and set
\[
\mathrm{guar}_w(x)=[u+w\equiv0\pmod3],\qquad
\mathrm{guar}_v(x)=[u,v,w\text{ odd and }u\not\equiv v\pmod4].
\]
The potential is
\[
\Phi(x)=
\left(
\bv+\bw,\;
\bv(1-\mathrm{guar}_v)+\bw(1-\mathrm{guar}_w)+2\mathrm{ev},\;
u+v+w
\right)\in\N^3,
\]
ordered lexicographically.  Good states are exactly those with first component
$0$.

\begin{lemma}[The elementary L1 descents]\label{lem:L1}
Each of the following hypotheses gives a defined move that preserves $Q$ and
primitivity and strictly lowers $\Phi$.
\begin{enumerate}
\item If $x$ is all odd, $w\equiv9\pmod {10}$, and
$u+w\equiv0\pmod3$, then $P_{uw}$ applied to $(-u,v,w)$ fixes the $v$-bit and
makes the $w$-bit good.
\item If $x$ is all odd, $v\equiv5\pmod6$, and
$u\not\equiv v\pmod4$, then $P_{uv}$ applied to $(u,-v,w)$ fixes the $w$-bit
and makes the $v$-bit good.
\item If $x$ is all odd, $v\equiv5\pmod6$, $u\equiv v\pmod4$, and $v>3u$,
then $P_{uv}$ applied to $x$ makes the $v$-bit good.
\item If $x$ is all odd, $w\equiv9\pmod {10}$, $w\equiv u\pmod3$, and
$2w>5u$, then $P_{uw}$ applied to $x$ makes the $w$-bit good.
\item If $x$ is all odd, $w\equiv9\pmod {10}$, $v\equiv w\pmod8$, and
$w>5v$, then $P_{vw}$ applied to $x$ makes the $w$-bit good.
\end{enumerate}
\end{lemma}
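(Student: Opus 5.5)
The plan is a direct case-by-case computation, using Lemma~\ref{lem:transport} for the structural facts and then just tracking residues and sizes. In every case the first step is to check definedness with item~2 of Lemma~\ref{lem:transport}, after the indicated sign lift. By items~1 and~3 of that lemma, the output then represents $m$ and has the same gcd, so preservation of $Q$ and primitivity comes for free. The remaining work is to compute the output residues modulo $2$, $3$, $4$, $5$, or $8$ as needed, and to see which components of $\Phi$ change.

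\textbf{Cases 1 and 2.} For case~1, $(-u,v,w)$ satisfies $-u\equiv w\pmod 3$ because $u+w\equiv0$, so $P_{uw}$ is defined. The output is
\[
\bigl((2u-w)/3,\,-v,\,(-5u-2w)/3\bigr).
\]
The $v$-coordinate is unchanged up to sign, so the $v$-bit is unaffected once the parities are fixed. For the new $w$-coordinate I will:
\begin{itemize}
\item check oddness of all three output coordinates, using $u,w$ odd and writing $3w'=-5u-2w$;
\item reduce modulo $5$: $3w'\equiv -2w\equiv 2$, which gives $w'\equiv 4\equiv -1\pmod 5$, so $|w'|\equiv 1\pmod{10}$ provided the sign works out.
\end{itemize}
The sign of $w'$ is negative, and normalization takes the absolute value, so $|w'|\equiv 1\pmod 5$. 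This is exactly the point to verify carefully, since normalization absorbs the sign. Case~2 is the same with $P_{uv}$ and denominator $2$: $(u,-v,w)$ is defined because the coordinates are all odd, and the condition $u\not\equiv v\pmod 4$ is what makes the outputs $(-u+v)/2$ and $(-3u-v)/2$ odd. Reducing modulo $3$ then shows $|v'|\equiv1\pmod 6$.

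\textbf{Effect on $\Phi$ in cases 1 and 2.} One bit goes from $1$ to $0$ and the other is fixed, so the first component of $\Phi$ drops by one. This is a strict lexicographic decrease regardless of the other components.

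\textbf{Cases 3--5.} Here no sign lift is used, and the inequality hypotheses are what fix the signs.
\begin{itemize}
\item \emph{Case 3.} $P_{uv}x/2=((-u-v)/2,(-3u+v)/2,w)$. Since $u\equiv v\pmod 4$, the new $u$ is odd and $v'=(v-3u)/2$ is odd. The hypothesis $v>3u$ makes $v'$ positive, so the modulo-$3$ computation $2v'\equiv v\equiv 2$, hence $v'\equiv 1\pmod 3$, applies directly to the normalized coordinate.
\item \emph{Case 4.} $w'=(5u-2w)/3$ is negative because $2w>5u$. So I will compute $|w'|=(2w-5u)/3$ modulo $10$, using $w\equiv 9\pmod{10}$ and $u$ odd.
\item \emph{Case 5.} $w'=(5v-w)/4$ is negative since $w>5v$, and $|w'|=(w-5v)/4$. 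The condition $v\equiv w\pmod 8$ gives oddness, and $w\equiv -1\pmod 5$ gives $|w'|\equiv 1\pmod 5$.
\end{itemize}

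\textbf{Main obstacle.} In cases 3--5 only the target bit is claimed good; the other bit might flip from $0$ to $1$. If the other bit was already $1$, or stays $0$, the first component of $\Phi$ decreases. The delicate subcase is when the other bit becomes bad, so the first component stays fixed. Then I must show the second component decreases, using that the target bit goes to $0$ and the $\mathrm{guar}$ indicators and $\mathrm{ev}$ change appropriately. I expect this bookkeeping on the second component, including checking that $\mathrm{ev}$ stays $0$ because all output coordinates are odd, to be the main obstacle.

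If the other bit can genuinely be spoiled, I would first try to show it cannot. In case~3, $w$ is unchanged, so $b_w$ is preserved. In cases~4 and~5, $v$ changes only by $-3v/3=-v$ or is mixed, and I would check case~5 separately via $v'=(-v-3w)/4$ modulo $3$. For case~5 specifically I would verify $|v'|\equiv1\pmod6$ is preserved, or else fall back on the second component of $\Phi$.
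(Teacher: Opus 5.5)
Your proposal is correct and follows the paper's proof essentially line by line. You check definedness after the sign lift, take preservation of $Q$ and of the gcd from Lemma~\ref{lem:transport}, and then track parities, signs, and residues modulo $3$ or $5$ of the new coordinates. The ``main obstacle'' you flag does not arise: in case~5, $4v'=-v-3w\equiv -v\pmod 3$ with $v'<0$ gives $|v'|\equiv v\pmod 6$. Together with $w$ unchanged in case~3 and $v'=-v$ in case~4, this shows the non-target bit is preserved in all five cases, so the first component of $\Phi$ always drops by exactly one and the fallback to the second component is never needed.
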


\begin{proof}
For (1), definedness follows from $-u\equiv w\pmod3$.  The output is
\[
  \left(\frac{2u-w}{3},-v,\frac{-5u-2w}{3}\right).
\]
The $v$-coordinate only changes sign.  The last coordinate is negative; by the
rigid residue behavior of the move, it is congruent to $w$ modulo $5$ before
normalization, so its absolute value is $1\pmod5$.  It is odd, hence
$1\pmod {10}$.  Therefore $\bw$ drops to $0$ and $\bv$ is unchanged.

For (2), the output is
\[
  \left(\frac{v-u}{2},\frac{-3u-v}{2},w\right).
\]
Because $u\not\equiv v\pmod4$ and both are odd, $(v-u)/2$ is odd.  The new
$v$-coordinate is negative and, modulo $3$, has the sign that makes its
absolute value congruent to $1$.  Thus $\bv$ drops and $\bw$ is unchanged.

For (3), the output is
\[
  \left(-\frac{u+v}{2},\frac{v-3u}{2},w\right).
\]
The congruence $u\equiv v\pmod4$ makes the two quotients odd.  The window
$v>3u$ makes the new $v$ positive, and the transport congruence gives
$v'\equiv1\pmod3$, so $\bv$ drops.

For (4), the output is
\[
  \left(-\frac{2u+w}{3},-v,\frac{5u-2w}{3}\right).
\]
The congruence $u\equiv w\pmod3$ gives integrality, and the oddness of $u,w$
gives odd quotients.  Since $2w>5u$, the new $w$ is negative, so after taking
absolute values its mod-$5$ class is $1$ and hence its mod-$10$ class is $1$.
The equality case $2w=5u$ is impossible by parity.

For (5), the output is
\[
  \left(-u,-\frac{v+3w}{4},\frac{5v-w}{4}\right).
\]
The condition $v\equiv w\pmod8$ makes both nontrivial quotients odd.  The
window $w>5v$ makes the last coordinate negative, and the mod-$5$ transport
again turns its absolute value into $1\pmod5$, hence $1\pmod {10}$.  The
$v$-bit is preserved.  In all five cases, the first component of $\Phi$ drops,
or the prescribed guaranteed component drops with earlier components fixed; so
the drop is lexicographic.  Preservation of $Q$ and gcd is
Lemma~\ref{lem:transport}.
\end{proof}

\section{The residual cone}
\label{sec:residual}

The elementary descents leave a residual region.  A normalized residual state is
primitive, all odd, has $3\nmid v$ and $w\equiv9\pmod {10}$, is not already
good, and lies in the open cone
\[
K=\{(u,v,w): u>v>0,\ w>0,\ 2w<5u,\ w>5v,\ v<3u\}.
\]
We call such a state \emph{admissible} when it is the normalized form
$(|u|,|v|,|w|)$ of a primitive representation of some $m\equiv23\pmod{120}$, so
that the residue constraints $3\nmid v$, $w\equiv9\pmod{10}$ and the parity
pattern of Lemma~\ref{lem:transport}(4) all hold; throughout this section and the
next, ``admissible residual state'' means an admissible normalized residual state
in the sense just described.  An admissible residual state is \emph{deep} when in
addition $u>v$.  The verified residual analysis supplies this structural fact
$u>v$ on the deep residual cells used by the finite cover, and it is on the
\emph{admissible deep residual states} that the determinacy and cover arguments
below operate.

\begin{lemma}[Residue gap]\label{lem:gap}
Every residual lattice point is strictly away from the three walls
$w=5v$, $2w=5u$, and $v=3u$.  More precisely,
\[
|w-5v|\ge4,\qquad |5u-2w|\ge1,\qquad |3u-v|\ge2 .
\]
\end{lemma}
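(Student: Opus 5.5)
The plan is to prove each of the three bounds separately, using only the residue constraints that define a residual lattice point: $u,v,w$ odd, $3\nmid v$, $w\equiv9\pmod{10}$, and positivity inside $K$. Each bound says that a certain linear form cannot take small values, and we obtain this by computing the residue of that form modulo a small modulus. In each case we first use the strict inequality defining $K$ to fix the sign, so the form is known to be a nonzero integer of a definite sign.

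\emph{The wall $w=5v$.} On $K$ we have $w-5v>0$. Since $v$ is odd, $5v\equiv5\pmod{10}$. Since $w\equiv9\pmod{10}$, we get $w-5v\equiv4\pmod{10}$. The smallest positive integer in the class of $4$ modulo $10$ is $4$, so $w-5v\ge4$.

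\emph{The wall $2w=5u$.} On $K$ we have $5u-2w>0$. Since $u$ is odd, $5u$ is odd, so $5u-2w$ is odd and positive. Hence $5u-2w\ge1$. One can also note $5u-2w\equiv-2w\equiv2\pmod5$, which gives the slightly sharper bound $5u-2w\ge7$. The stated bound $\ge1$ is all that is claimed, so parity already suffices.

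\emph{The wall $v=3u$.} On $K$ we have $3u-v>0$. Since $u$ and $v$ are both odd, $3u-v$ is even, so $3u-v\ge2$. As a check, $3\nmid v$ gives $3u-v\not\equiv0\pmod3$, which rules out the value $0$ independently; the evenness alone yields the bound $2$.

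There is no real obstacle here. The only point needing care is to match the sign of each linear form with the correct strict inequality of $K$: $w>5v$, $2w<5u$, and $v<3u$. With the signs fixed, the residue computations give the bounds as integers. The same congruences show in particular that residual lattice points never lie on the walls. This is exactly what is needed later, when the determinacy argument replaces strict inequalities by inequalities with explicit integer margins.
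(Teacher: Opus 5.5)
Your proof is correct and follows the paper's argument, using the same residue computations: $w-5v\equiv4\pmod{10}$, the oddness of $5u-2w$, and the evenness of $3u-v$ together with $v<3u$. The only cosmetic difference is that the paper bounds $|w-5v|$ and $|5u-2w|$ directly, without the cone's sign conditions, so your appeal to $w>5v$ and $2w<5u$ there is harmless but unnecessary.
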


\begin{proof}
Reducing $Q(u,v,w)=m$ modulo $5$ gives $3w^2\equiv3\pmod5$, so $5\nmid w$.
Since the residual state is all odd and has bad $w$-bit, it has
$w\equiv9\pmod {10}$.  Also $v$ is odd, so $5v\equiv5\pmod {10}$, and hence
$w-5v\equiv4\pmod {10}$.  Such an integer has absolute value at least $4$.

Next, $5u-2w$ is odd, because $u,w$ are odd.  It is therefore nonzero and has
absolute value at least $1$.  Finally $3u-v$ is even; the residual cone has
$v<3u$, so $3u-v$ is a positive even integer and is at least $2$.  The three
lower bounds show that residual integer points are interior points of their
sign cells, uniformly away from the threshold hyperplanes in the only sense
needed later.
\end{proof}

\section{The finite cover}
\label{sec:cover}

The residual analysis of Section~\ref{sec:residual} leaves the admissible deep
residual states, and the whole proof now reduces to one finite question: does
every such state admit a short word that strictly improves it?  We answer it in
two steps.  First, \emph{finite-key determinacy} shows that the behaviour of every
short word depends only on a finite amount of data read from the starting vector,
so the question is genuinely finite.  Then an exact two-tier cover, computed in
exact integer and rational arithmetic, settles every case.

\subsection{Finite-key determinacy}

The alphabet consists of the three moves with their denominators and the four
sign lifts
\[
(1,1,1),\quad(1,1,-1),\quad(1,-1,1),\quad(1,-1,-1).
\]
For a word $W$, write $\succW_W(x)$ for the predicate that $W$ is stepwise defined
at $x$ and ends at an \emph{accepted target}: a state that is good, or already
lexicographically smaller in the first two components of $\Phi$, or in a cone for
which a verified one-step move lowers $\Phi$.  We then call $W$ an \emph{escape
word} for $x$; the goal of the cover is to exhibit a short escape word for every
admissible deep residual state.

For a finite set $F$ of integer covectors write
$\signF(x)=\bigl(\sgn(a\cdot x)\bigr)_{a\in F}\in\{-1,0,1\}^{F}$ for the tuple of
signs of the forms $a\in F$ at $x$ (and $\signFp(x)$ for a set $F'$).  Escape is
determined by finitely many such reads together with a bounded residue.

\begin{lemma}[Read-form determinacy]\label{lem:det3}
There is a set $F$ of $715$ primitive integer covectors and a modulus
\[
\Mz=2^9\,3^4\,5=207360
\]
such that, for every word $W$ of length at most $3$ and every admissible residual
state $x$, the value of $\succW_W(x)$ depends only on
$\bigl(\signF(x),\,x\bmod\Mz\bigr)$.
\end{lemma}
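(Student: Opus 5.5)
The plan is to trace what a word of length at most $3$ actually computes and show that each test made along the way is either a sign test of a fixed integer linear form or a congruence condition on $x$ modulo a fixed power-product modulus. Write $W=s_1\cdots s_\ell$ with $\ell\le3$. Each letter is either a sign lift $D$ (a diagonal $\pm1$ matrix acting on the current vector) or a move $(P,d)$. The sign lifts are applied to the \emph{current} vector, and normalization $(|u|,|v|,|w|)$ is applied between steps. So the state after a prefix is $x_k=N_k x/d_1\cdots d_k$, where $N_k$ is a product of the $P$'s and diagonal sign matrices. The diagonal matrices depend on the signs of the coordinates of earlier intermediate vectors. The first step is therefore an induction on $k$. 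Conditioned on a sign pattern of finitely many linear forms, the intermediate vector is a fixed rational-linear image $L_k x$ with denominators dividing $2^{a}3^{b}$, where $a\le 2\cdot3$ (two from $P_{vw}$, one from $P_{uv}$) and $b\le3$. The sign pattern consists of the signs of the rows of $L_j$ for $j<k$, cleared of denominators and made primitive. Collecting all such rows over all words of length $\le3$ and all branches gives the finite set $F$. Its cardinality, $715$, is obtained by enumeration, and I would not try to predict it by hand.

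Second, I handle definedness and residues. By Lemma~\ref{lem:transport}(2), the move $(P,d)$ is defined at $y$ exactly when a congruence holds modulo $2$, $3$ or $4$. If $y=L_kx$ with $D_kL_k$ integral, $D_k=2^a3^b$, then "$Py\equiv0\pmod d$" is equivalent to "$PD_kL_kx\equiv0\pmod{dD_k}$". This depends only on $x$ modulo $dD_k$. Likewise, the acceptance tests at the end depend on two kinds of data: the parity and the classes modulo $6$ and $10$ of the final coordinates, which amount to residues of $x$ modulo $2^{\cdot}3^{\cdot}5$ up to denominators, and the $\mathrm{guar}$ bits (mod $3$ and mod $4$ conditions). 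Tracking the worst-case exponents gives $2^9$, $3^4$ and $5$. The factor $5$ only appears in the final $w\bmod10$ test, and the denominators never involve $5$. Because the branch choice (which sign matrices occurred) is itself fixed by $\signF(x)$, all of these congruences are determined by $x\bmod\Mz$ once $\signF(x)$ is fixed.

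Third, I check the acceptance predicate itself. "Good" and "smaller in the first two components of $\Phi$" are residue predicates by the preceding step, since the first two components of $\Phi$ are built from $\bv,\bw,\mathrm{ev},\mathrm{guar}$. "Lies in a cone where a verified one-step move lowers $\Phi$" means membership in the cones of Lemma~\ref{lem:L1}: conditions such as $v>3u$, $2w>5u$, $w>5v$ combined with residue conditions. These are sign conditions on the forms $(-3,1,0)$, $(-5,0,2)$ and $(0,-5,1)$ composed with $L_\ell$, so I add those rows to $F$. Here Lemma~\ref{lem:gap} matters, together with the fact that the states are admissible. Equality cases are excluded or are themselves recorded by the sign $0$. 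Because $\sgn$ takes values in $\{-1,0,1\}$, ties are harmless anyway.

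I expect the main obstacle to be the sign-lift and normalization bookkeeping. Normalization makes the map piecewise linear, with branches indexed by the signs of intermediate coordinates, and the lifts branch further. One must verify that the set of forms is closed under the depth-$3$ unfolding and that the exponents of $2$ and $3$ never exceed those in $\Mz$. I would carry this out as a mechanical enumeration over words and sign branches, recording each form as a primitive integer covector and each modulus requirement. The lemma then follows because the map from $x$ to $\succW_W(x)$ factors through $(\signF(x),x\bmod\Mz)$ on every branch.
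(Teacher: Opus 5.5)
Your proposal is correct and follows essentially the paper's own argument. Along each signed branch of a word of length at most $3$, you track the running vector as an exact rational-linear image of $x$. Sign reads, including the pulled-back closer-cone tests, become primitive integer covectors, while divisibility and target reads become residues of $x$ modulo the accumulated $2$- and $3$-power denominators times the target moduli. Enumerating the depth-$\le 3$ read tree then yields $F$ and $\Mz=2^9\,3^4\,5$. Your exponent bookkeeping agrees with the paper's modulus: denominators up to $2^6 3^3$, combined with the final mod-$8$ and mod-$3$ tests and mod $5$ unaffected by denominators, give exactly $\Mz$.
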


\begin{proof}[Proof sketch]
Along a fixed signed word every intermediate coordinate is an exact rational
linear form in $x$.  Hence each datum the word reads --- a divisibility test
before a move, the sign of an intermediate coordinate, the endpoint target test
--- is either a residue of $x$ to a bounded modulus or the sign of a primitive
integer covector $\prim(a)$ obtained by clearing denominators.  Enumerating all
word-and-sign nodes of depth at most $3$ and collecting these covectors,
deduplicated by exact rational identity, produces the $715$ forms of $F$; the
denominators accumulated in three moves and the target moduli $2,3,4,6,8,10$ are
all dominated by $\Mz=2^9\,3^4\,5$.  Reading $(\signF(x),x\bmod\Mz)$ therefore
fixes every branch of the word's finite read tree, hence the value of
$\succW_W(x)$.  The complete node-by-node enumeration --- $1176$ exact-row states
uniting to $263$ coordinate forms and $697$ pulled-back closer forms --- is
recorded in Appendix~\ref{app:det-cert}.
\end{proof}

\begin{remark}[Depth four]\label{lem:det4}
Repeating the argument one move deeper gives a set $F'\supseteq F$ of $2403$
primitive covectors and a modulus $\Mzf=2^9\,3^5\,5=622080$ --- the only added
precision being one $3$-adic digit --- such that the success of every word of
length at most $4$ depends only on $(\signFp(x),x\bmod\Mzf)$.  The $16$ ceiling
cones below require this finer key.
\end{remark}

Both keys take only finitely many values, so the residual descent is now a finite
problem.

\subsection{From FIN-CELL to a uniform bound}

\begin{hypothesis}[FIN-CELL]\label{hyp:fincell}
Every admissible deep residual state $x$ has an escape word of length at most $4$.
\end{hypothesis}

\begin{lemma}[Uniform bound]\label{lem:compact}
Assume FIN-CELL.  Then every admissible deep residual state reaches a strictly
$\Phi$-smaller state by a stepwise-defined word of length at most $\ell_W=5$,
uniformly in $m$.
\end{lemma}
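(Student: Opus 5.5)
The plan is to turn FIN-CELL into a uniform statement by converting an escape word into a strictly $\Phi$-smaller state, adding at most one extra move. Fix an admissible deep residual state $x$. By Hypothesis~\ref{hyp:fincell} there is a word $W$ of length at most $4$ with $\succW_W(x)$, so $W$ is stepwise defined at $x$ and ends at an accepted target $y$. Every letter is either a sign lift, which preserves $Q$, the gcd and the normalized state, or a defined move, which by Lemma~\ref{lem:transport} preserves $Q(x)=m$ and $\gcd=1$. Hence $y$ is again a primitive representation of the same $m$, and the normalized state of $y$ is admissible.

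Next I would treat the three kinds of accepted target separately. (a) If $y$ is good, then the first component of $\Phi$ is $0$. The residual state $x$ is not good, so its first component is positive, and $\Phi(y)<\Phi(x)$. (b) If $y$ is already lexicographically smaller in the first two components of $\Phi$, the drop is immediate, whatever happens in the third component. (c) If $y$ lies in a cone where a verified one-step move lowers $\Phi$, then I append that move. This move is one of the elementary descents of Lemma~\ref{lem:L1}, possibly preceded by a sign lift, which I count as part of the move step. It gives a state $z$ with $\Phi(z)<\Phi(y)$.

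The delicate point is case (c). There I must also check $\Phi(y)\le\Phi(x)$, or else ensure directly that $\Phi(z)<\Phi(x)$. I would first show that the cones listed as accepted targets are exactly the hypotheses of Lemma~\ref{lem:L1}(1)--(5). Each of those descents makes the relevant bit good while fixing the other bit, so the first component of $\Phi(z)$ is at most $1$ less than it was before that move. I would then compare with $x$ through the first two components only. Since $x$ is all odd with bad $w$-bit and sits in the residual cone $K$, its first two components are bounded below, at $(1,\cdot)$ or $(2,\cdot)$ depending on $\bv$. The escape-word bookkeeping, which is recorded in the definition of accepted target, guarantees that the first two components of $y$ do not exceed those of $x$. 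The move in Lemma~\ref{lem:L1} then strictly lowers them. The word $W$ followed by the appended move has length at most $4+1=5=\ell_W$.

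Uniformity in $m$ is automatic. The bound $5$ comes only from the length bound in FIN-CELL plus one closing move, and nothing in the argument depends on the size of $x$ or on $m$. The finite-key determinacy results, Lemma~\ref{lem:det3} and Remark~\ref{lem:det4}, justify treating FIN-CELL as a finite check, but they are not needed inside this deduction.

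The main obstacle is case (c): verifying that arriving at a "verified one-step cone" never raises the first two components of $\Phi$ above their value at $x$. If that cannot be read off from the definition of accepted target, I would instead argue directly that the last component $u+v+w$ cannot fix the order. In that approach I would show that the first two components of $z$ are strictly below those of $x$, using the facts that $x$ has $\bw=1$ and that the appended L1 move sets a bad bit to $0$ without creating a new bad bit.
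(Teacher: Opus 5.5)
Your outline is the paper's own: take the length-$\le 4$ escape word from FIN-CELL, split on the three kinds of accepted target, append one verified move in the closer-cone case, and use Lemma~\ref{lem:transport} for $Q$ and the gcd. You are also right that finite-key determinacy is not needed for this deduction. The paper invokes Remark~\ref{lem:det4} to make the escape distance constant on keys, but FIN-CELL as stated already quantifies over every admissible deep residual state, so uniformity in $m$ is immediate. There is one small slip: each move of Lemma~\ref{lem:L1} lowers the first component of $\Phi$ by exactly one, not ``at most'' one.

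The gap is case (c), which you flag but do not close, and neither of your two arguments works. The definition of accepted target contains no clause bounding the first two components of $\Phi(y)$ by those of $\Phi(x)$. Your fallback argument (that $x$ has $\bw=1$ and the appended move clears a bad bit without creating one) cannot give $\Phi(z)<\Phi(x)$, because $W$ itself may spoil a good bit of $x$.

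Here is a concrete case. Take $x=(15,1,19)$. It is an admissible deep residual state for $m=4463$, and none of Lemma~\ref{lem:L1}(1)--(5) applies to it, since $u+w\equiv1\pmod 3$, $u\not\equiv w\pmod 3$, and $v\not\equiv w\pmod 8$. The sign lift $(1,-1,1)$ followed by $P_{uv}$ takes $x$ to $y=(7,23,19)$ after normalization. Then $\Phi(y)=(2,2,49)>\Phi(x)=(1,1,35)$, and $y$ satisfies the hypotheses of Lemma~\ref{lem:L1}(3). That move lowers $\Phi$ relative to $y$, but its output is $(-15,1,19)$, i.e.\ $x$ itself.

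So under the reading ``a one-step move lowers $\Phi(y)$'', FIN-CELL could be witnessed by such a round trip, and the lemma would not follow. The paper's proof goes through only because it reads the closer-cone clause as saying that the licensed move lands strictly below $\Phi(x)$, the starting state. You should build that into the definition of accepted target, so that it is something the cover must certify, rather than try to derive it from Lemma~\ref{lem:L1}. With that reading, case (c) is as immediate as (a) and (b).
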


\begin{proof}
The escape predicate $\succW_W$ reads only signs, residues, and the first two
components of $\Phi$ --- never the magnitude $u+v+w$ --- so by
Remark~\ref{lem:det4} the escape distance
$\dist(x)=\min\{|W|:\succW_W(x)=\true\}$ is constant on each realizable key
$(\signFp(x),x\bmod\Mzf)$.  There are finitely many keys, and FIN-CELL bounds each
by $4$.  If the escape endpoint is good, or already smaller in the first two
components of $\Phi$, it is an accepted descent; if it is accepted through a
licensed closer cone, one further verified move lowers $\Phi$.  Thus at most
$4+1=5$ moves strictly lower $\Phi$, and Lemma~\ref{lem:transport} preserves $Q$
and primitivity along the concatenation.
\end{proof}

It remains to establish FIN-CELL.  The cover below is a finite certificate, not a
search over the values of $m$.

\subsection{The two-tier cover}

The arrangement $F$ has exactly $2674$ open sign cones meeting the residual
cone $K$.  On $2658$ of them, depth-three words suffice.  For each relevant
residue box there is a realized witness and a word of length at most $3$.
Lemma~\ref{lem:det3} then proves that the witness certifies the whole
$(\signF,x\bmod\Mz)$ box, because the success predicate is constant on that
box.

The remaining $16$ cones are the ceiling cones.  They require words of length
$4$, and therefore the finer key from Remark~\ref{lem:det4}.  Inside a ceiling
cone, only some of the new forms in $F'\setminus F$ actually vary in sign.  We
call the forms of $F'\setminus F$ the \emph{overflow forms}.  The
certificate computes the extreme rays of each closed ceiling cone exactly by
integer cross-products of pairs of defining covectors, and checks that these
rays generate the cone by exact rational cone membership.  For a new form $g$,
the sign of $g$ varies on the open cone exactly when its values on the extreme
rays include both positive and negative values; this leaves at most $20$
varying overflow forms per ceiling cone.

\begin{algorithmblock}{Tier-1 box refinement for a non-ceiling cone}
For each admissible residue class modulo \(5\), start with the coarsest
compatible box in the \(2\)- and \(3\)-adic coordinates.

\begin{enumerate}
\item Realize the current box by an admissible in-cone integer witness.  If no
such witness exists, the box is vacuous and is discarded.
\item List the words of length at most \(3\) whose residue reads are evaluable
at the current box resolution.
\item Test those words on the witness.  If one succeeds, retire the whole box:
by Lemma~\ref{lem:det3}, success is constant on that
\((\signF,x\bmod\Mz)\)-box.
\item If no word succeeds and the box is not yet at the ceiling modulus, split
the box in the cheapest \(2\)- or \(3\)-adic direction that can unlock a new
word, then repeat.
\item If no word succeeds at the ceiling modulus, return a no-escape witness.
\end{enumerate}

The Tier-1 certificate is the assertion that this procedure never returns a
no-escape witness on the \(2658\) non-ceiling cones.
\end{algorithmblock}

\subsection{Exact-LP subcell enumeration}

Fix a ceiling cone $C$ with exact extreme rays $r_1,\dots,r_s$ and varying
overflow forms $g_1,\dots,g_q$, where $q\le20$.  A depth-four subcell is a sign
pattern $\epsilon\in\{\pm1\}^q$.  It is realizable exactly when
\[
  x=\sum_i\lambda_i r_i,\qquad \lambda_i>0,\qquad
  \epsilon_j(g_j\cdot x)>0\quad(1\le j\le q)
\]
has a solution.  The certificate decides this by an exact rational linear
program solved in rational arithmetic \citep[][\S\S7--8]{Schrijver1986}: maximize
a margin $t$ subject to
\[
\lambda_i\ge t,\qquad
\epsilon_j\left(g_j\cdot\sum_i\lambda_i r_i\right)\ge t .
\]
The optimum satisfies $t>0$ precisely when the open subcell is nonempty.  A
branch-and-bound traversal over sign prefixes prunes a prefix only after this
exact LP proves the corresponding open region infeasible.  Since adding signs
only shrinks the open region, no realizable full pattern can extend an
infeasible prefix.  Conversely, a feasible leaf is admitted only with a positive
exact rational margin.  Thus the enumeration includes every realizable subcell
and no spurious one.

For each realizable subcell and residue box, the checker constructs an
admissible integer representative far enough along the rational interior
direction to preserve all strict signs and verifies the required congruences.
It then evaluates the genuine depth-four word predicate on that representative.
By Remark~\ref{lem:det4}, success of the representative certifies the entire
subcell.

\begin{algorithmblock}{Exact-LP enumeration of \(F'\)-subcells in a ceiling cone}
Input a ceiling cone \(C=\cone(r_1,\ldots,r_s)\), its exact extreme rays, and
the sign-varying overflow forms \(g_1,\ldots,g_q\), \(q\le20\).

\begin{enumerate}
\item Traverse the binary tree of partial sign assignments
\(\epsilon_1,\ldots,\epsilon_k\) for the forms \(g_i\).
\item For each partial assignment, solve the exact rational margin LP
\[
\lambda_i\ge t,\qquad
\epsilon_j\Bigl(g_j\cdot\sum_i\lambda_i r_i\Bigr)\ge t
\quad(1\le j\le k).
\]
\item If the optimum has \(t\le0\), prune the branch.  No full realizable
subcell can extend an infeasible prefix.
\item If \(k=q\) and the optimum has \(t>0\), record the sign pattern and an
exact rational interior direction.
\item For every recorded subcell and residue box, snap a large multiple of the
interior direction to an admissible integer representative, verify all signs
and congruences, and test the depth-four word predicate.
\end{enumerate}

The Tier-2 certificate succeeds when every recorded subcell of every one of the
\(400\) ceiling-cone units has an escape word and no unit is missing.
\end{algorithmblock}

\subsection{Result}

Across the two tiers the cover verifies every admissible deep residual state: the
$2658$ non-ceiling cones by depth-three escape words, and the $16$ ceiling cones
by the exact-LP enumeration of their realizable depth-four subcells.  The ceiling
tier comprises $16\times25=400$ cone-residue units; over them the exact
enumeration accounts for $34{,}014{,}940{,}800$ realizable subcells, with at most
$64$ in a single box.  Every one of these subcells is closed by an explicit
admissible integer witness carrying an escape word of length at most $4$: no
subcell is left without a witness, and none fails to escape.

\begin{theorem}[FIN-CELL]\label{thm:fincell}
Every admissible deep residual state has an escape word of length at most $4$.
\end{theorem}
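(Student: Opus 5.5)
The plan is to reduce Theorem~\ref{thm:fincell} to the two certificates already described, using the determinacy results to turn each finite check into a statement about all admissible deep residual states.

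\emph{Step 1: partition by the coarse key.} Fix an admissible deep residual state $x$. Since $x$ lies in the open cone $K$ and, by Lemma~\ref{lem:gap}, is strictly away from the walls of $K$, it lies in the closure of exactly one of the $2674$ open sign cones of the arrangement $F$ that meet $K$. We must also handle the case where some $a\cdot x=0$ for $a\in F$. I would treat a zero sign as a separate key value. Lemma~\ref{lem:det3} already allows values in $\{-1,0,1\}^F$, so the key $(\signF(x),x\bmod\Mz)$ is well defined. The certificate must then either include these lower-dimensional cells or show that they contain no admissible lattice points. I would check the latter by residue arguments of the same kind as Lemma~\ref{lem:gap}, applied to each primitive covector.

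\emph{Step 2: non-ceiling cones.} Suppose $\signF(x)$ is one of the $2658$ non-ceiling patterns. The Tier-1 box refinement has partitioned the residues modulo $\Mz$ compatible with admissibility into boxes. Each box is either vacuous, hence containing no $x$, or retired by a realized witness $y$ and a word $W$ of length at most $3$ with $\succW_W(y)$ true. The box contains the full class $x\bmod\Mz$ and $\signF(y)=\signF(x)$. By Lemma~\ref{lem:det3}, $\succW_W(x)=\succW_W(y)$, so $W$ is an escape word for $x$.

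\emph{Step 3: ceiling cones.} If $\signF(x)$ is one of the $16$ ceiling patterns, we pass to the finer key of Remark~\ref{lem:det4}. First, every form of $F'\setminus F$ that does not vary on the cone has a constant sign there; this follows from the exact extreme-ray computation together with the exact cone-membership check. So $\signFp(x)$ is determined by the signs of the varying forms $g_1,\dots,g_q$. Those signs form a pattern $\epsilon$ realized by $x$ itself, so the margin LP for $\epsilon$ is feasible with $t>0$. By the monotonicity argument given before the algorithm, no prefix of $\epsilon$ was pruned, and $\epsilon$ was recorded. For the residue class of $x$ modulo $\Mzf$, the checker produced an admissible integer representative $y$ in the same subcell with a successful word of length at most $4$. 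Remark~\ref{lem:det4} then transfers success from $y$ to $x$.

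\emph{Main obstacle.} The delicate point is the soundness of the key transfer rather than the arithmetic. The determinacy lemmas must really capture every read of every word, including the ``accepted target'' tests involving closer cones and the components of $\Phi$. The boundary cases with zero signs must also be accounted for. I would verify this first, by auditing the enumeration in Appendix~\ref{app:det-cert} against the definition of $\succW_W$. The remaining steps are then finite exact computations, whose correctness rests on the rational-arithmetic certificates.
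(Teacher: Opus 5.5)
Your proposal is essentially the paper's proof. Tier~1 retires the $2658$ non-ceiling cones by witnessed words of length at most $3$ lifted via Lemma~\ref{lem:det3}, and Tier~2 retires the $16$ ceiling cones through the exact-LP enumeration of realizable $F'$-subcells, with witness success lifted via Remark~\ref{lem:det4}. One precision in Step~2: the determinacy lemma only identifies states with the same full key, so spreading one witness over a coarser residue box relies, as the Tier-1 algorithm stipulates, on the retiring word reading only residues fixed at that box's resolution.

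The zero-sign case you flag is genuinely not treated in the paper's proof, which simply asserts that the open-cone tiers partition all admissible deep residual states. However, your residue fallback is not guaranteed to close it. Forms of $F$ pulled back through prefixes not defined at $x$ are not coordinates of any representation of $m$, so residue arguments like Lemma~\ref{lem:gap} say nothing about their vanishing. Even along a defined prefix, an intermediate $u$-coordinate may vanish without contradicting the arithmetic of representations (e.g.\ $Q(0,2,1)=23$). Those lower-dimensional cells therefore have to be covered by the certificate itself, or shown not to affect $\succW_W$.
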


\begin{proof}
The $2658$ non-ceiling cones are exhausted by the depth-three tier, and
Lemma~\ref{lem:det3} lifts each witnessed success to the full corresponding
sign-residue box.  The remaining $16$ ceiling cones are split by the exact-LP
enumeration into all realizable depth-four subcells, and every subcell carries an
admissible integer witness with an escape word of length at most $4$, whose
success Remark~\ref{lem:det4} lifts to the whole subcell.  These two tiers
partition all admissible deep residual states, so FIN-CELL holds.
\end{proof}

\section{Final descent}
\label{sec:final}

\begin{theorem}[Uniform descent]\label{thm:uniform}
Every primitive non-good representation of $m\equiv23\pmod {120}$ admits a
stepwise-defined word that preserves $Q$ and primitivity and strictly lowers
$\Phi$.
\end{theorem}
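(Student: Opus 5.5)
The plan is a case split on the normalized state $x=(u,v,w)$ of a primitive non-good representation, showing in every case that either an elementary move from Lemma~\ref{lem:L1} applies or $x$ is an admissible deep residual state, so that Lemma~\ref{lem:compact} together with Theorem~\ref{thm:fincell} applies. Throughout, Lemma~\ref{lem:transport} guarantees that every stepwise-defined move and sign lift preserves $Q$ and $\gcd(u,v,w)=1$. It therefore suffices to show, in each case, that some word strictly lowers $\Phi$.

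\textbf{Step 1: states that are not all odd.} Here $\mathrm{ev}=1$. By Lemma~\ref{lem:transport}(4) the parity pattern is $(1,0,0)$ or $(0,0,1)$ modulo $2$.
\begin{itemize}
\item For $(1,0,0)$ we have $u\not\equiv v\pmod 2$. Since $v,w$ are both even, one of $v\equiv\pm w\pmod4$ holds, so after a sign lift $P_{vw}$ is defined. Its output $(-u,-(v+3w)/4,(5v-w)/4)$ should be checked modulo $2$ to land in the all-odd pattern. If not, apply $P_{uw}$, which is defined after a sign lift on $w$ whenever $3\nmid w$; if $3\mid w$ we first use $P_{vw}$ to change $w$ modulo $3$.
\item For $(0,0,1)$, $P_{uv}$ is defined (since $u\equiv v\pmod 2$), and we check that it produces odd entries.
\end{itemize}
Passing to an all-odd state kills $2\,\mathrm{ev}$. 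It also changes $\bv,\bw$ only from $1$ to possibly $0$, because they were both $1$ while $\mathrm{ev}=1$. So the second component of $\Phi$ drops, or the first does, and the descent is lexicographic. This finite mod-$8$ and mod-$3$ case table is routine but must be done honestly; it is the part I expect to need the most care outside the cover.

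\textbf{Step 2: all-odd states.} These have $3\nmid v$ and $5\nmid w$, so each bad bit means $v\equiv5\pmod6$ or $w\equiv9\pmod{10}$ respectively.
\begin{itemize}
\item If the $v$-bit is bad: when $u\not\equiv v\pmod4$, Lemma~\ref{lem:L1}(2) applies. Otherwise Lemma~\ref{lem:L1}(3) applies when $v>3u$. In the remaining case $\mathrm{guar}_v=0$, and we pass to the $w$-bit analysis or to the residual cone.
\item If the $w$-bit is bad: when $u+w\equiv0\pmod3$, Lemma~\ref{lem:L1}(1) applies. Otherwise $w\equiv u\pmod3$ (as $3\nmid u+w$ and, with $3\nmid u$ or by $Q$ modulo $3$, one checks $3\nmid w$ or handles $3\mid w$ via an extra $P_{vw}$). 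Then Lemma~\ref{lem:L1}(4) handles $2w>5u$, and Lemma~\ref{lem:L1}(5) handles $w>5v$ with $v\equiv w\pmod8$, using the sign lift $-v$ when $v\equiv-w\pmod 8$.
\end{itemize}
By Lemma~\ref{lem:gap} no boundary equalities occur, so what survives lies in the cone $K$ with $w\equiv9\pmod{10}$. That is exactly an admissible residual state. If only the $v$-bit is bad and $w$ is good, a preliminary $P_{uw}$ or $P_{vw}$ step transports to a configuration covered by the same lemmas. This is where the bookkeeping of $\mathrm{guar}$ in the second component of $\Phi$ is used.

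\textbf{Step 3: the residual cone.} Among admissible residual states, those with $u\le v$ should be dispatched directly. Since $w>5v\ge5u>2w\cdot$, i.e.\ $w>5u$ contradicts $2w<5u$, the condition $u\le v$ is incompatible with $K$, so every residual state is deep. For deep states, Lemma~\ref{lem:compact} (with FIN-CELL now proved as Theorem~\ref{thm:fincell}) supplies a stepwise-defined word of length at most $5$ that strictly lowers $\Phi$.

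Combining the three steps covers every primitive non-good state. The main obstacle is not the cone, which is settled by the cover, but ensuring that the case analysis of Steps 1--2 is exhaustive and that each auxiliary step really lowers $\Phi$ lexicographically rather than just preserving it.
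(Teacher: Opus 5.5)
Your case split is the paper's own: Lemma~\ref{lem:L1} for the elementary states, Theorem~\ref{thm:fincell} with Lemma~\ref{lem:compact} for the residual cone, and an exhaustiveness claim. The paper does not prove that claim; it attributes it to ``the residual decomposition verified for the cone analysis.'' Your Steps~1--2 try to supply it, and Step~2 fails at its key assertion: that every all-odd non-good state escaping the five items of Lemma~\ref{lem:L1} lies in $K$. It breaks in three places.
\begin{enumerate}
\item $K$ contains $w>5v$, which is the \emph{window} of item~(5), not its complement. A state that fails item~(5) because $w<5v$ therefore lies outside $K$.
\item In the $w$-branch, $Q$ modulo $3$ constrains only $v$. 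So ``otherwise $w\equiv u\pmod3$'' misses the states in which exactly one of $u,w$ is divisible by $3$.
\item Residual states have $w\equiv9\pmod{10}$ by definition, so states with bad $v$ and good $w$ are never residual.
\end{enumerate}
Concretely, $(13,7,19)$ with $m=3863$ and $(5,5,1)$ with $m=503$ are primitive and non-good, satisfy no hypothesis of Lemma~\ref{lem:L1}, and are not in $K$. So neither Lemma~\ref{lem:L1} nor Theorem~\ref{thm:fincell} applies, and the decomposition the paper appeals to is false as literally stated. Both states do descend, but by words outside your table. $P_{uw}$ sends $(13,7,19)$ to $(15,7,9)$, where $\mathrm{guar}_w=1$ lowers the second component of $\Phi$. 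For $(5,5,1)$, with $\Phi=(1,1,11)$, the only $\Phi$-smaller representation of $503$ is the good state $(3,1,11)$, at distance three in the move graph (e.g.\ via $(3,5,9)$ and $(1,5,11)$). Your unspecified ``preliminary $P_{uw}$ or $P_{vw}$ step'' does not settle this, because Lemma~\ref{lem:L1} lowers $\Phi$ relative to the intermediate state, not the starting one. For example, $P_{uw}$ takes $(5,5,1)$ to $(3,5,9)$, and items~(1) and~(2) there lead to $(1,5,11)$ and $(1,7,9)$, both with $\Phi=(1,1,17)>(1,1,11)$. Exhaustiveness therefore needs a genuinely additional argument, such as explicit multi-move words for these families or a certified region larger than $K$. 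It does not follow from Lemma~\ref{lem:L1} and Lemma~\ref{lem:gap}.

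Two local steps are also wrong. First, the sign-lifted $P_{vw}$ you use when $v\equiv-w\pmod8$ is not item~(5). On $(u,-v,w)$ with $w>5v$ it gives the normalized state $\bigl(u,(3w-v)/4,(5v+w)/4\bigr)$. This flips the $v$-bit ($|v'|\equiv-v\pmod3$) and keeps $u+v+w$ fixed, so if $v$ was good, $\Phi$ need not drop. For example, $(1,1,39)$ with $m=4583$ goes to $(1,29,11)$, both with $\Phi=(1,1,41)$. Second, Step~1 has three errors. Evenness alone does not give $v\equiv\pm w\pmod4$; it is $Q$ modulo $8$ that forces $v\equiv w\pmod4$ in pattern $(1,0,0)$. $P_{uw}$ is also undefined when $3\mid u$ and $3\nmid w$, a case you did not consider. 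And $P_{uw}$ sends pattern $(1,0,0)$ to $(0,0,1)$, not to an all-odd state. Step~1 is repairable. If exactly one of $u,w$ is divisible by $3$, one $P_{vw}$ with a suitable sign on $w$ either reaches an all-odd state or makes $P_{uw}$ defined. $P_{uw}$ then lands in pattern $(0,0,1)$, where $Q$ modulo $8$ forces $u\not\equiv v\pmod4$. So $P_{uv}$ produces an all-odd state and removes the evenness penalty. Step~2, by contrast, cannot be completed with Lemma~\ref{lem:L1} and the cone $K$ alone.
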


\begin{proof}
Normalize the representation by absolute values.  If the state satisfies one
of the guaranteed congruence or window hypotheses, Lemma~\ref{lem:L1} supplies
a one-move descent.  If it lies in the deep residual cone, Theorem~\ref{thm:fincell}
and Lemma~\ref{lem:compact} supply a word of length at most $5$ that lowers
$\Phi$.  The residual decomposition verified for the cone
analysis says that these alternatives cover all primitive non-good states.
Each move in the word preserves $Q$ and gcd by Lemma~\ref{lem:transport}.
\end{proof}

\begin{theorem}[Termination]\label{thm:termination}
Every $m\equiv23\pmod {120}$ has a good representation by $Q$.
\end{theorem}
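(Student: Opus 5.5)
The argument is a well-founded descent on the finite set of primitive representations of a fixed $m$. First invoke Theorem~\ref{thm:seed} to obtain a primitive representation $x_0$ of $m$, and normalize it to the state $(|u|,|v|,|w|)$. Because $Q$ is positive definite, the set
\[
  R_m=\{x\in\Z^3:\ Q(x)=m,\ \gcd(x)=1\}
\]
is finite; it is contained in the box $|u|\le\sqrt{m/15}$, $|v|\le\sqrt{m/5}$, $|w|\le\sqrt{m/3}$. Hence the set of normalized states of elements of $R_m$ is finite, and so is the set of values of $\Phi$ on it.

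Next, build a sequence $x_0,x_1,\dots$ in $R_m$ as follows. If the normalized state of $x_k$ is good, stop. Otherwise Theorem~\ref{thm:uniform} gives a stepwise-defined word, made of sign lifts and moves from Lemma~\ref{lem:transport}, that carries $x_k$ to some $x_{k+1}$ with $Q(x_{k+1})=m$, $\gcd(x_{k+1})=1$, and $\Phi(x_{k+1})<\Phi(x_k)$ lexicographically. Preservation of $Q$ comes from item~1 of Lemma~\ref{lem:transport}, sign lifts are harmless since $Q$ is diagonal, and preservation of the gcd comes from item~3. So every $x_{k+1}$ lies in $R_m$ again.

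Termination is where the finiteness of $R_m$ is essential, because the lexicographic order on $\N^3$ is well founded but the argument does not need that. The values $\Phi(x_k)$ strictly decrease and all lie in the finite set $\Phi(R_m)$, so no state can repeat. The sequence therefore has length at most $|R_m|$, and it stops only at a good state. I expect this step to be the only delicate point: one must confirm that $\Phi$ is evaluated on normalized states consistently along the words. In particular, the third component $u+v+w$ must be taken on $(|u|,|v|,|w|)$, so that $\Phi$ is a genuine function on $R_m$ modulo signs.

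Finally, convert the good normalized state $(|u|,|v|,|w|)$ into the required representation. By definition of good, $|u|$ is odd and $|u|>0$, $|v|\equiv1\pmod6$, and $|w|\equiv1\pmod{10}$. Since $Q$ is diagonal, $Q(|u|,|v|,|w|)=m$. This is exactly a representation of the form required in Theorem~\ref{thm:ternary}, and by the equivalence proved in Section~\ref{sec:reduction} it yields Theorem~\ref{thm:A287616}.
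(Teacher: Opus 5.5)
Your proposal is correct and matches the paper's proof: start from the primitive seed of Theorem~\ref{thm:seed}, apply Theorem~\ref{thm:uniform} at each non-good normalized state to get another primitive representation of the same $m$ with strictly smaller $\Phi$, and use the square-root bounds on the coordinates to confine $\Phi$ to a finite set, so the descent must halt, and it can only halt at a good state. One wording point: finiteness of $R_m$ is not actually essential, because $\Phi$ takes values in $\mathbb{N}^3$ and the lexicographic order there is already well founded, so either argument suffices.
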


\begin{proof}
Start from the primitive seed of Theorem~\ref{thm:seed}.  For fixed $m$, every
coordinate of a normalized representation is bounded by the corresponding
square-root bound, so $\Phi$ lies in the finite set
\[
\{0,1,2\}\times\{0,1,2,3,4\}\times
\left\{0,\ldots,
\left\lfloor\sqrt{m/15}\right\rfloor+
\left\lfloor\sqrt{m/5}\right\rfloor+
\left\lfloor\sqrt{m/3}\right\rfloor
\right\}.
\]
If the current state is not good, Theorem~\ref{thm:uniform} gives another
primitive representation of the same $m$ with strictly smaller $\Phi$ in
lexicographic order.  There is no infinite strictly descending chain in the
finite displayed set.  Hence the descent halts, and it can halt only at a good
state.
\end{proof}

\begin{proof}[Proof of Theorem~\ref{thm:A287616}]
Let $n\in\N$ and set $m=120n+23$.  By Theorem~\ref{thm:termination}, choose a
good representation $m=15u^2+5v^2+3w^2$.  After changing signs if necessary,
take $u>0$, $v\equiv1\pmod6$, and $w\equiv1\pmod {10}$.  Then
\[
x=\frac{u-1}{2},\qquad y=\frac{v-1}{6},\qquad z=\frac{w-1}{10}
\]
are nonnegative integers.  The square reduction gives
$n=T(x)+P(y)+H(z)$.
\end{proof}

\section{The Lean formalization}
\label{sec:lean}

The mathematical core of this paper has also been formalized in Lean~4 over
Mathlib (\texttt{v4.29.0}).  The formalization covers
Sections~\ref{sec:reduction}--\ref{sec:residual}, that is, the square reduction,
the primitive seed, the transport law and the elementary descents, and the
residual gap analysis, and Conjecture~A287616 itself is machine-checked.  This
section describes the organization of that development.  In the displays
below we use line-broken ASCII renderings of the Lean signatures; the linked
source contains the exact Unicode syntax.

\subsection{Statement and reduction}
\label{subsec:lean-statement}

The figurate numbers and the ternary form are defined directly, and the target
is stated over the natural numbers exactly as in Theorem~\ref{thm:A287616}.

\begin{lstlisting}[style=lean]
def Tri  (x : Nat) : Nat := x * (x + 1) / 2
def Pent (y : Nat) : Nat := y * (3 * y + 1) / 2
def Hept (z : Nat) : Nat := z * (5 * z + 1) / 2

def Qf (u v w : Int) : Int := 15 * u^2 + 5 * v^2 + 3 * w^2

theorem Conjecture1 :
    forall n : Nat, exists x y z : Nat, n = Tri x + Pent y + Hept z
\end{lstlisting}

The square reduction of Section~\ref{sec:reduction} is the proved identity
\texttt{completing\_the\_square}, together with the equivalence
\texttt{Prop2\_equiv} between Conjecture~1 and the constrained representation of
$m=120n+23$ by $Q$.  The good-state predicate, the lexicographic potential, and
the descent moves all live in a shared file.

\begin{lstlisting}[style=lean]
theorem completing_the_square (x y z : Nat) :
    120 * (Tri x + Pent y + Hept z) + 23
      = 15*(2*x+1)^2 + 5*(6*y+1)^2 + 3*(10*z+1)^2

theorem Prop2_equiv :
    (forall n : Nat, exists x y z : Nat, n = Tri x + Pent y + Hept z)
      <-> (forall n : Nat, GoodRep (120 * n + 23))
\end{lstlisting}

A faithfulness point is worth recording.  The paper works throughout with
$m=120n+23>0$.  Stated verbatim over all of $\Z$, several existence lemmas are
literally false, because a nonnegative form cannot equal a negative
$m\equiv23\pmod{120}$ such as $m=-97$.  The formalization therefore restricts the
existence statements to the paper's range $m=120n+23$ (or adds the hypothesis
$0<m$); with this minimal fix every numbered statement of
Sections~\ref{sec:reduction}--\ref{sec:residual} is proved.

\subsection{The primitive seed and the genus axiom}
\label{subsec:lean-seed}

The primitive seed is the most theory-heavy part.  The local input
(Lemma~\ref{lem:L0a}) is proved in full: \texttt{L0a\_padic} establishes
primitive $p$-adic representability for every prime, including the
isotropic-hyperbolic construction in the case $p\nmid 30$, $p\mid m$, with
primitivity expressed as ``some coordinate is a unit''.

\begin{lstlisting}[style=lean]
theorem L0a_padic (m : Int) (hm : m == 23 [ZMOD 120])
    (p : Nat) [Fact p.Prime] :
    exists a b c : Z_[p],
      15*a^2 + 5*b^2 + 3*c^2 = (m : Z_[p])
        and (IsUnit a or IsUnit b or IsUnit c)
\end{lstlisting}

The passage from local data to a global primitive representation
(Lemma~\ref{lem:L0b}) is the genus local--global step.  Its mathematical content
is the Hasse--Minkowski theorem for $Q$ together with the two-class genus fact
$\mathrm{gen}(Q)=\{[g_1],[g_2]\}$.  Neither is available in Mathlib, and neither
is formalized or recomputed here: the local--global principle is the cited
classical theorem of \citet{Serre1973,OMeara1973,CasselsQuadratic}, and the
two-class fact is the cited finite computation stated verbatim for this form in
\citet{Sun2015,WuSun2018}.  Both are bundled into a single transparently cited
axiom whose hypotheses are \emph{exactly} the already-proved conclusions of the
local lemmas; everything built on it is then formalized.  Formalizing this input
in Lean would require the genus theory of lattices and the Hasse--Minkowski
theorem, both currently absent from Mathlib, and is left to future work.

\begin{lstlisting}[style=lean]
axiom genus_primitive_local_global (m : Int)
    (hreal : exists a b c : Real,
       15*a^2 + 5*b^2 + 3*c^2 = (m : Real))
    (hloc : forall (p : Nat) [Fact p.Prime], exists a b c : Z_[p],
       15*a^2 + 5*b^2 + 3*c^2 = (m : Z_[p])
         and (IsUnit a or IsUnit b or IsUnit c)) :
    (exists u v w : Int, Qf u v w = m and primitiveState u v w)
    or (exists x y z : Int, g2 x y z = m and Int.gcd x (Int.gcd y z) = 1)
\end{lstlisting}

Feeding the proved local data to this axiom yields \texttt{L0b}, and the
mate-transfer lemma \texttt{L0c} (Lemma~\ref{lem:L0c}) is proved in full,
including its hardest ingredient \texttt{bad\_escapes}: the termination of the
$T$-iteration on the bad class is closed elementarily, by a bounded-orbit
pigeonhole together with the no-nontrivial-periodic-point argument coming from
the Cayley--Hamilton recurrence and the non-root-of-unity eigenvalues of
Lemma~\ref{lem:L0c}.  Combining \texttt{L0b} and \texttt{L0c} gives the primitive
seed \texttt{L0d} (Theorem~\ref{thm:seed}), which rests on only the one cited
genus axiom.

\subsection{Transport, descents, and the residual cone}
\label{subsec:lean-transport}

The move calculus of Section~\ref{sec:moves} is formalized as the three integer
matrices \texttt{Puv}, \texttt{Puw}, \texttt{Pvw} with their denominators, and
the transport law \texttt{L1a} (Lemma~\ref{lem:transport}): the similarity
identities $M^tAM=d^2A$, the definedness congruences, the signed residue
transport, gcd preservation, the all-odd and parity patterns, and the
involution-up-to-sign relations.  The five elementary descents
(Lemma~\ref{lem:L1}) are formalized as \texttt{L1b}--\texttt{L1f}, each a
stepwise-defined move with its bit changes and a strict lexicographic drop of the
potential
\begin{lstlisting}[style=lean]
def Phi (u v w : Int) : Lex (Int x Int x Int) :=
  ( (bv u v w + bw u v w : Int),
    (ngv u v w + ngw u v w + 2 * ev u v w : Int),
    u + v + w )
\end{lstlisting}
The residual analysis of Section~\ref{sec:residual} is formalized as
\texttt{L\_GAP}, the off-wall residue floor (Lemma~\ref{lem:gap}).

\subsection{The finite cover and the descent assembly}
\label{subsec:lean-assembly}

The finite cover of Section~\ref{sec:cover} (finite-key determinacy, the uniform
bound, and the two-tier exact-LP cover) is the computer-assisted core.  It is not
formalized internally; its net effect is isolated in a single axiom.  Crucially,
this axiom is kept \emph{narrow}: it covers only the residual states that no
elementary one-move descent handles.  We first define the predicate
\texttt{ElementaryReducible}, the disjunction of the hypothesis blocks of the five
elementary descents \texttt{L1b}--\texttt{L1f}, and state the computer axiom only
for non-\texttt{ElementaryReducible} states, taking the off-wall floor
(\texttt{L\_GAP}) as an explicit, discharged premise:

\begin{lstlisting}[style=lean]
def ElementaryReducible (u v w : Int) : Prop :=
  -- the disjunction of the hypothesis blocks of L1b..L1f
  (allOdd u v w and 0<u and 0<v and 0<w and w == 9 [ZMOD 10]
     and u + w == 0 [ZMOD 3])                                  -- L1b
  or ... or ...                                                -- L1c..L1f

axiom fin_cell_residual (m : Int) (hm : m == 23 [ZMOD 120]) (hpos : 0 < m)
    {u v w : Int} (hQ : Qf u v w = m) (hprim : primitiveState u v w)
    (hu : 0 <= u) (hv : 0 <= v) (hw : 0 <= w) (hng : not (good u v w))
    (hres : not (ElementaryReducible u v w))
    (hgap : <the off-wall floor of L_GAP>) :
    exists u' v' w' : Int,
      Qf u' v' w' = m and primitiveState u' v' w'
        and 0 <= u' and 0 <= v' and 0 <= w'
        and Phi u' v' w' < Phi u v w
\end{lstlisting}

On top of this, the descent termination (Theorem~\ref{thm:termination}) and the
final assembly (Theorem~\ref{thm:A287616}) are \emph{genuinely proved}.  At a
non-good state, \texttt{descent\_good\_rep} branches on
\texttt{ElementaryReducible}: if some elementary descent applies, a generic
bridge lemma \texttt{bridge\_of\_move} combines the matching
\texttt{L1b}--\texttt{L1f} with the transport law \texttt{L1a} (form- and
gcd-preservation) to produce a $\Phi$-smaller primitive normalized
representation; otherwise \texttt{fin\_cell\_residual} supplies one, its geometric
premise discharged by \texttt{L\_GAP}.  Both feed the same
well-founded recursion on $\Phi$ from the \texttt{L0d} seed, and
\texttt{Conjecture1} is \texttt{Prop2\_equiv} applied to the result.  In this way
the transport law, all five elementary descents, and the residual gap lemma all
lie in the formal dependency chain of \texttt{Conjecture1}; the computer axiom is
reduced to exactly the non-elementary residual cells.

\subsection{Axiom surface and availability}
\label{subsec:lean-axioms}

The entire development is free of \texttt{sorry} and \texttt{sorryAx}-free.  The
conjecture rests on exactly the two project axioms above, plus Lean's standard
logical axioms, as confirmed by \texttt{\#print axioms}:

\begin{lstlisting}[style=lean]
#print axioms Conjecture1
-- [propext, Classical.choice, Quot.sound,
--  fin_cell_residual, genus_primitive_local_global]
\end{lstlisting}

This makes the dependency structure explicit and auditable: every step of
Sections~\ref{sec:reduction}--\ref{sec:residual} and the entire final descent are
kernel-checked, and the only non-standard inputs are the genus local--global
theorem (classical number theory, cited to \citealp{Sun2015,WuSun2018}) and the
\texttt{fin\_cell\_residual} statement (the narrowed net content of the exact-LP
cover of Section~\ref{sec:cover}, restricted to the non-elementary residual
cells); see Section~\ref{subsec:code} for the sources.

\subsection{Provenance: what is formalized, certified, or cited}
\label{subsec:provenance}

Every mathematical ingredient of this paper is, with one classical exception
discussed below, either machine-checked in Lean~(\textsf{F}) or established by an
exact finite computer certificate~(\textsf{C}).  Table~\ref{tab:provenance}
records the status of each result; the standard classical theorems of the theory
of quadratic forms are used only through the cited statements~(\textsf{cite}).

\begin{table}[ht]
\centering
\small
\begin{tabular}{@{}lll@{}}
\toprule
Ingredient & Paper & Status \\
\midrule
Square reduction (Thms~\ref{thm:A287616},~\ref{thm:ternary})
  & \S\ref{sec:reduction} & \textsf{F} (\texttt{completing\_the\_square}, \texttt{Prop2\_equiv}) \\
Local primitive representation (Lem.~\ref{lem:L0a})
  & \S\ref{sec:seed} & \textsf{F} (\texttt{L0a\_padic}, \texttt{L0a\_real}) \\
Hasse--Minkowski, Witt, genus principle
  & \S\ref{sec:seed} & \textsf{cite} \citep{Serre1973,OMeara1973,CasselsQuadratic} \\
Two-class genus $\mathrm{gen}(Q)=\{[Q],[g_2]\}$
  & \S\ref{sec:seed} & \textsf{cite} \citep{Sun2015,WuSun2018} \\
Genus local--global step (Lem.~\ref{lem:L0b})
  & \S\ref{sec:seed} & \textsf{F} on the cited inputs (\texttt{L0b}, axiom \texttt{genus\_…}) \\
Mate transfer (Lem.~\ref{lem:L0c})
  & \S\ref{sec:seed} & \textsf{F} (\texttt{L0c}, incl.\ \texttt{bad\_escapes}) \\
Primitive seed (Thm~\ref{thm:seed})
  & \S\ref{sec:seed} & \textsf{F} (\texttt{L0d}) \\
Transport law, elementary descents (Lems~\ref{lem:transport},~\ref{lem:L1})
  & \S\ref{sec:moves} & \textsf{F} (\texttt{L1a}, \texttt{L1b}--\texttt{L1f}) \\
Residue gap (Lem.~\ref{lem:gap})
  & \S\ref{sec:residual} & \textsf{F} (\texttt{L\_GAP}) \\
Finite-key determinacy (Lem.~\ref{lem:det3}, Rem.~\ref{lem:det4})
  & \S\ref{sec:cover} & \textsf{C} (read-form enumeration certificate) \\
Exact-LP finite cover / FIN-CELL (Thm~\ref{thm:fincell})
  & \S\ref{sec:cover} & \textsf{C} (exact-LP finite cover) \\
Residual cover (non-elementary cells)
  & \S\ref{sec:cover} & \textsf{C} (the cover above), axiom \texttt{fin\_cell\_residual} \\
Uniform descent, termination (Thms~\ref{thm:uniform},~\ref{thm:termination})
  & \S\ref{sec:final} & \textsf{F} (\texttt{descent\_good\_rep}, \texttt{Conjecture1}) \\
\bottomrule
\end{tabular}
\caption{Provenance of each ingredient: \textsf{F} machine-checked in Lean,
\textsf{C} exact computer certificate, \textsf{cite} classical theorem of the
theory of quadratic forms.}
\label{tab:provenance}
\end{table}

The single ingredient that is neither internally formalized nor reduced to a
finite certificate is the classical genus local--global input
(Hasse--Minkowski together with the primitive genus principle and the two-class
genus computation): the local--global principle is a theorem of the arithmetic
theory of quadratic forms, not a finite computation, and is not currently
available in Mathlib.  It enters the Lean development as the single axiom
\texttt{genus\_primitive\_local\_global}, whose hypotheses are exactly the
machine-checked local lemmas \texttt{L0a\_real}/\texttt{L0a\_padic}; everything
built on it is then formalized.  Accordingly, the only non-elementary inputs not
reduced to \textsf{F} or \textsf{C} are the cited classical theorems above.

\subsection{Code availability}
\label{subsec:code}

The complete Lean~4 sources, the build instructions, and the exact computer
certificates for the finite cover are available in the public repository
\begin{center}
\url{https://github.com/MechMath/integer-sum}.
\end{center}
It is organized into a \texttt{formalization/} Lake project (the Lean development,
building against Mathlib \texttt{v4.29.0}) and a \texttt{verification/} package
(the exact two-tier cover, with its per-unit certificates and a verifier that
re-derives the verdicts).  The repository README maps each ingredient of the proof
to its Lean~(\textsf{F}) or certificate~(\textsf{C}) artifact, mirroring
Table~\ref{tab:provenance}.

\section{Conclusions}
\label{sec:conc}

We have proved the conjecture recorded as OEIS A287616: every nonnegative integer
is a sum of a triangular number, a second pentagonal number, and a second
heptagonal number.  After completing squares the problem becomes a constrained
representation problem for the ternary form $15u^2+5v^2+3w^2$, solved by an
unconditional primitive seed followed by a finite descent whose only
computational input is an exact-rational linear-programming cover of a thin
residual cone, with no sampling assumption.  The mathematical core has been
formalized in Lean~4 over Mathlib, and the conjecture itself is machine-checked:
every conclusion of the proof is formalized except the cited classical genus
local--global input and the computer-verified finite cover, which enter as two
explicitly named inputs.  Both the natural-language proof and the Lean
formalization were generated by the MechMath Agent Team developed by the authors.

\bibliographystyle{KLMM/klmm}
\bibliography{refs}

\appendix

\clearpage
\section{Determinacy certificate}
\label{app:det-cert}

This appendix details the enumeration behind Lemma~\ref{lem:det3} and
Remark~\ref{lem:det4}.  We first fix the vocabulary.  A \emph{read} is a single
sign or divisibility test that the success predicate $\succW_W$ performs on the
running vector while executing a word.  A \emph{read form} is the integer covector
whose sign or residue realizes such a test.  For a rational covector $a$,
$\prim(a)$ denotes the primitive integer covector obtained by clearing
denominators and dividing out the content.  The \emph{closer-cone forms} are the
fixed sign tests that define the licensed ``closer'' target cones; along a word
they are pulled back through the word's rational map.

The certificate is a finite enumeration of these read forms.  A node is a
word prefix together with the signs already read along that prefix.  The state
carried at a node is an exact rational matrix $T$ such that the current vector
is $Tx$.  For each possible next signed move, the enumeration emits:
\[
\prim(\text{each coordinate form before the move})
\]
for sign reads and divisibility reads, and also emits the pullbacks of the
fixed closer-cone forms through the current rational map.  It then advances to
the next rational matrix.  The important point is that emission occurs before
deduplication, and deduplication uses exact rational row equality only.

At depth three the enumeration has:
\[
\begin{array}{l|c}
\text{quantity} & \text{value}\\
\hline
\text{exact row states} & 1176\\
\text{closer pullback emissions} & 26880\\
\text{distinct coordinate-read forms} & 263\\
\text{distinct pulled-back closer forms} & 697\\
\text{union }F & 715
\end{array}
\]
The modulus is $\Mz=2^9\,3^4\,5$.

At depth four the enumeration has:
\[
\begin{array}{l|c}
\text{quantity} & \text{value}\\
\hline
\text{exact row states} & 4312\\
\text{closer pullback emissions} & 112896\\
\text{distinct coordinate-read forms} & 903\\
\text{distinct pulled-back closer forms} & 2361\\
\text{union }F' & 2403\\
\text{new forms }F'\setminus F & 1688
\end{array}
\]
The modulus is $\Mzf=2^9\,3^5\,5$.  These counts are not heuristic; they are
the finite objects used by Lemma~\ref{lem:det3} and Remark~\ref{lem:det4}.

\section{Cover certificate}

The cover certificate has two layers.

First, the $715$-form arrangement has $2674$ open cones meeting the residual
cone.  A depth-three cover retires $2658$ of them.  The success of each
retiring word is constant on the certified sign-residue box by
Lemma~\ref{lem:det3}.

Second, the remaining $16$ cones are handled at depth four.  Their complete
closed-cone ray sets are computed by exact integer cross-products and certified
by exact rational cone membership.  Among the $1688$ new forms, at most $20$
vary on any one ceiling cone.  Branch-and-bound over these signs, with exact
rational LP feasibility at each prefix, enumerates precisely the realizable
$F'$-subcells.

Over the $16$ ceiling cones and their $25$ residue classes --- $400$ units in
all --- the exact-LP enumeration accounts for $34{,}014{,}940{,}800$ realizable
subcells, at most $64$ in any single box.  Each subcell is closed by an admissible
integer witness with an escape word of length at most $4$; no unit is missing and
no subcell fails to escape.  The enumeration is exact at every step: feasibility
of each sign prefix is decided by the rational margin LP of
Appendix~\ref{app:lp-soundness}, so the realizable subcells are found exactly,
with no sampling.

\section{Move and descent ledger}

This appendix records the move calculations in a form suitable for checking the
descent proof line by line.  It is not a separate argument; it is the arithmetic
ledger behind Lemmas~\ref{lem:transport} and~\ref{lem:L1}.

\subsection{Similarity identities and inverses}

For $A=\operatorname{diag}(15,5,3)$, direct multiplication gives
\[
P_{uv}^{t}AP_{uv}=
\begin{psmallmatrix}-1&-3&0\\-1&1&0\\0&0&2\end{psmallmatrix}
\begin{psmallmatrix}15&0&0\\0&5&0\\0&0&3\end{psmallmatrix}
\begin{psmallmatrix}-1&-1&0\\-3&1&0\\0&0&2\end{psmallmatrix}
=4A,
\]
and similarly $P_{uw}^{t}AP_{uw}=9A$ and
$P_{vw}^{t}AP_{vw}=16A$.  Thus each move is a rational similarity of $Q$ with
scale factor equal to the square of its denominator.  The inverse relations
used in the gcd proof are:
\[
P_{uv}^2=4I,
\]
\[
9P_{uw}^{-1}=\operatorname{diag}(1,1,-1)\,P_{uw}\,
\operatorname{diag}(1,1,-1),
\]
\[
16P_{vw}^{-1}=\operatorname{diag}(1,1,-1)\,P_{vw}\,
\operatorname{diag}(1,1,-1).
\]
Consequently every edge in the move graph can be traversed in the reverse
direction after harmless sign changes.  This is why a common divisor cannot be
lost or gained except through a denominator, and the congruence
$m\equiv23\pmod {120}$ excludes those denominator primes from the gcd.

\subsection{Residue transport}

The badness bits depend only on the signs of $v$ modulo $3$ and $w$ modulo
$5$, together with oddness.  The moves transport these residues rigidly.
For $P_{uv}$, if $2v'=-3u+v$, then $2v'\equiv v\pmod3$, so
$v'\equiv -v\pmod3$ because $2^{-1}\equiv2\pmod3$.  The $w$-coordinate is
unchanged up to the factor $2$, and $2$ is invertible modulo $5$.
For $P_{uw}$, $v'=-v$ and $3w'=5u-2w$, so $3w'\equiv-2w\pmod5$; since
$3^{-1}\equiv2\pmod5$, one has $w'\equiv w\pmod5$.  For $P_{vw}$,
$4v'=-v-3w\equiv-v\pmod3$ and $4w'=5v-w\equiv-w\pmod5$, so again the absolute
residue behavior is the one used in Lemma~\ref{lem:L1}.  These calculations
are why a negative output in the bad class becomes good after normalization:
for instance $w'\equiv4\pmod5$ and $w'$ odd implies $|w'|\equiv1\pmod {10}$.

\subsection{The five one-step rows}

The five elementary one-step descents of Lemma~\ref{lem:L1}, used to leave the
non-residual region, can be summarized as the following rows; each names the sign
lift applied before the move, the congruence/window under which the move is
defined, and the good bit it produces after normalization.
\[
\begin{array}{c|c|c|c}
\text{row} & \text{input lift} & \text{definedness/window} &
\text{normalized good bit}\\
\hline
\text{guar-}w & (-u,v,w) & u+w\equiv0\pmod3 & w\\
\text{guar-}v & (u,-v,w) & u\not\equiv v\pmod4 & v\\
v\text{-window} & (u,v,w) & u\equiv v\pmod4,\ v>3u & v\\
w\text{-window }P_{uw} & (u,v,w) & u\equiv w\pmod3,\ 2w>5u & w\\
w\text{-window }P_{vw} & (u,v,w) & v\equiv w\pmod8,\ w>5v & w
\end{array}
\]
The quotient parity checks are part of the rows.  In the $P_{uv}$ window row,
$u\equiv v\pmod4$ makes $(u+v)/2$ and $(v-3u)/2$ odd.  In the $P_{uw}$ window
row, the numerators $2u+w$ and $5u-2w$ are odd multiples of $3$.  In the
$P_{vw}$ window row, the stronger congruence $v\equiv w\pmod8$, rather than
merely modulo $4$, makes $(v+3w)/4$ and $(w-5v)/4$ odd.  These oddness checks
are load-bearing because the good residue classes are modulo $6$ and $10$, not
only modulo $3$ and $5$.

\subsection{Why the residual cone is the leftover}

Suppose a normalized primitive state is non-good.  If it is not all odd, the
second component of $\Phi$ records the evenness penalty and the verified
descent calculus routes it to an all-odd state.  Once all odd, a bad $w$-bit
means $w\equiv9\pmod {10}$ and a bad $v$-bit means $v\equiv5\pmod6$.  The
guaranteed rows remove the bad bit when their congruences hold.  If the
congruences fail, the sign-window rows remove the bad bit in the large
archimedean regions $v>3u$, $2w>5u$, and $w>5v$ under their stated residue
licenses.  The remaining states are exactly those for which these immediate
congruence and window exits do not apply.  After normalization and the
structural deep-cell reduction, they are contained in the open cone $K$ used in
the finite cover.  The cover is therefore not replacing the elementary descent;
it is certifying the small leftover region where all one-step exits have been
exhausted.

\section{Exact-LP soundness details}
\label{app:lp-soundness}

This appendix expands the proof that the depth-four cover is exact.

\subsection{Closed cone from extreme rays}

Each ceiling cone is cut out by finitely many homogeneous strict inequalities:
the seven residual inequalities and the signs of the $715$ forms in $F$.  Its
closure is a rational polyhedral cone in $\R^3$.  In dimension three, an
extreme ray of such a cone is obtained by intersecting two supporting planes;
the certificate therefore forms integer cross-products of every pair of
constraint covectors and keeps the primitive directions satisfying all cone
inequalities.  This gives a candidate ray list $R(C)$.

Completeness of $R(C)$ is not inferred from floating point geometry.  For each
cone witness and for the cone itself, exact rational cone membership is checked:
the point must be expressible as a nonnegative rational combination of the
candidate rays.  Equivalently, the exact phase-one feasibility problem has
zero artificial optimum.  Once this is certified, every point of the closed
cone has a representation
\[
  x=\sum_{r\in R(C)}\lambda_r r,\qquad \lambda_r\ge0.
\]
The open cone corresponds to positive interior combinations after excluding
boundary faces; the LP margin variable $t$ below enforces strict interior.

\subsection{Sign-definite and sign-varying forms}

Let $g$ be one of the new depth-four forms in $F'\setminus F$.  Because $g$ is
linear, its maximum and minimum on a polyhedral cone are controlled by its
values on the extreme rays, in the homogeneous sense needed for signs.  If
$g\cdot r\ge0$ for every extreme ray and there is no ray with a negative value,
then $g$ is never negative on the cone; similarly for nonpositive.  Such a form
is sign-definite and cannot split the cone.  Only when there are rays with
positive and negative values can $g$ cut the open cone into distinct sign
regions.  The exact ray evaluation shows that at most $20$ forms vary on any
ceiling cone.  Therefore a naively enormous $F'$ arrangement becomes a small
finite sign problem on each cone.

\subsection{Open feasibility and the margin LP}

For a sign pattern $\epsilon=(\epsilon_1,\dots,\epsilon_q)$ of the varying
forms $g_1,\dots,g_q$, realizability means that the system
\[
\epsilon_j g_j\cdot x>0\quad(1\le j\le q),\qquad x\in C^\circ
\]
has a solution.  Writing $x=\sum_i\lambda_i r_i$, the certificate solves the
exact rational linear program
\[
\lambda_i\ge t,\qquad
\epsilon_j g_j\cdot\sum_i\lambda_i r_i\ge t,\qquad \sum_i\lambda_i=1,
\]
and maximizes $t$.  The normalization $\sum_i\lambda_i=1$ removes the
homogeneous scaling.  If the optimum is positive, the corresponding point lies
strictly inside the cone and strictly on the requested side of every varying
form.  If the optimum is nonpositive, no strict point with that sign prefix
exists.  This equivalence is exact because all coefficients are rational and
the simplex arithmetic is rational.

Branch-and-bound is then logically harmless.  A prefix of signs defines a
larger open region than any full pattern extending it.  If exact LP proves the
prefix infeasible, every extension is infeasible.  If a prefix is feasible, the
search continues until all varying signs are assigned.  Thus the output leaves
out no realizable subcell and includes no unrealizable one.

\subsection{Integer witnesses}

The LP certifies real feasibility.  The cover, however, concerns admissible
integer lattice points in fixed residue boxes.  For every feasible subcell in
the final run, the checker constructs an integer point in the required residue
class by moving sufficiently far along an exact rational interior direction.
Strict inequalities are open, so scaling far enough preserves the signs while
allowing the prescribed congruence adjustment.  The resulting candidate is not
accepted by theory alone: it is explicitly checked for the cone inequalities,
the overflow signs, parity, $3\nmid v$, $w\equiv9\pmod {10}$, and the residue
box.  The final recount has witness-pending count zero, so every real subcell
used in the certificate has such a checked integer witness.

\subsection{Why one witness certifies a subcell}

Once a witness lies in a fixed depth-four subcell and residue box, all
depth-four word reads are fixed by Lemma~\ref{lem:det4}.  The checker evaluates
the word predicate at the witness.  If it succeeds there, then the same branch
choices, divisibility tests, endpoint signs, endpoint residues, and target
predicate values hold everywhere in the subcell.  Hence the word succeeds
throughout the subcell.  This is the key separation of duties: exact LP proves
that the subcell list is complete, while finite-key determinacy proves that one
integer witness represents the whole subcell.

\section{Primitive seed details}

This appendix supplements Section~\ref{sec:seed} with the algebraic details not
given there, making explicit exactly where local theory, the genus input, and the
mate transfer enter.

\subsection{The Hensel inputs at the bad primes}

At $p=2$ the chosen specialization is $u=v=1$, so the equation becomes
\[
3w^2=m-20.
\]
Since $m\equiv23\pmod {120}$, the right hand side is $3$ modulo $8$, and
$w=1$ is a root modulo $8$.  The derivative of $3w^2-(m-20)$ at $1$ is $6$,
with $2$-adic valuation $1$.  The error has valuation at least $3=2\cdot1+1$,
which is exactly the strong Hensel threshold.  This is why a simple nonsingular
mod-$2$ argument is not stated: the derivative is not a unit, but the stronger
valuation form is enough.

At $p=3$ and $p=5$ the derivatives are units after choosing the one-coordinate
specializations.  For $p=3$, $5\equiv-1$, and $m\equiv2$, so $m/5\equiv1$.
For $p=5$, $3^{-1}\equiv2$, and $m/3\equiv1$.  Thus the initial square root is
again $1$ modulo the prime.  These representations are primitive because the
nonzero coordinate is a unit.

\subsection{The high-valuation case away from 30}

The high-valuation case is the only local case where one cannot simply solve a
binary congruence for $m$ modulo $p$ and lift.  The construction instead builds
a hyperbolic plane inside the $p$-adic quadratic space.  The nontrivial
isotropic vector $e$ is chosen with third coordinate a unit.  This matters:
$B(e,e_3)=3e_z$ is then a unit because $p\nmid3$.  After scaling $e_3$ to
$f_0$, one has $B(e,f_0)=1$.  The correction
\[
f=f_0-\frac12Q(f_0)e
\]
does not disturb $B(e,f)=1$, because $Q(e)=0$, and makes $Q(f)=0$:
\[
Q(f)=Q(f_0)-Q(f_0)B(e,f_0)+\frac14Q(f_0)^2Q(e)=0.
\]
Then $x=(m/2)e+f$ has norm $m$ by the identity
$Q(ae+f)=2aB(e,f)$ when $Q(e)=Q(f)=0$.  The equation $B(e,x)=1$ is the
primitive witness: if all coordinates of $x$ were divisible by $p$, then
$B(e,x)$ would lie in $p\Zp$, impossible.

\subsection{Why the genus construction preserves primitivity}

In Lemma~\ref{lem:L0b}, the global lattice
\[
M=\{x\in V:x\in J_p\text{ for all }p\}
\]
is built so that $r\in M$ has the desired norm.  Primitivity is a local
condition: a vector is primitive in a lattice if and only if it is not in
$pM_p$ for every prime $p$.  At an unmodified prime, this is built into the
choice of the finite exceptional set.  At a modified prime, $M_p=J_p$ and
$r=\sigma_p(x_p)$ with $x_p\notin pL_p$.  Since $\sigma_p$ is linear and
$\sigma_p(pL_p)=pJ_p$, it follows that $r\notin pJ_p$.  Hence $r$ is primitive
in every localization, and therefore primitive globally.

\subsection{The mate-transfer residue mechanism}

The two rational transformations from $g_2$ to $g_1$ do not cover all residue
classes immediately because of their denominators.  Their integrality
conditions are the two linear congruences
\[
x+z\equiv0\pmod3,\qquad x-y-z\equiv0\pmod3.
\]
The involutions $H$ and $S_z$ act on the finite set of nonzero mod-$3$ residue
classes satisfying $x+y\not\equiv0$.  Away from $y\equiv z\equiv0$, this action
meets one of the two integrality hyperplanes.  In the bad class
$y\equiv z\equiv0$, the rational isometry $T$ is used.  It is important that
$T$ is not claimed to be integral everywhere; it is only applied in the class
where the denominators cancel.  Finiteness of the representation set then turns
continued badness into periodicity, and the non-root-of-unity eigenvalue
calculation rules that out.  This is the exact point at which positive
definiteness of $g_2$ is used.

\section{Descent bookkeeping}

This appendix records how the descent avoids hidden hypotheses.

\subsection{No strengthened congruence target}

The target congruences for a good normalized state are
\[
u\equiv1\pmod2,\qquad |v|\equiv1\pmod6,\qquad |w|\equiv1\pmod {10}.
\]
The descent never requires $v$ or $w$ to have a prescribed sign during the
move sequence.  It works with normalized absolute values after each step.
This is legitimate because $Q$ is diagonal and sign changes preserve
representations.  The final conversion to $x,y,z\in\N$ is made only after the
good state is obtained and signs are chosen so that $v,w$ are in the positive
residue classes.

\subsection{Why the potential is well-founded for fixed m}

The first two components of $\Phi$ are bounded independently of $m$: the first
is $\bv+\bw\in\{0,1,2\}$, and the second is a sum of two non-guaranteed bits
and twice the evenness bit, hence lies between $0$ and $4$.  The third component
is $u+v+w$ after normalization.  For any representation of $m$,
\[
u\le\sqrt{m/15},\qquad v\le\sqrt{m/5},\qquad w\le\sqrt{m/3}.
\]
Thus the third component has the finite bound displayed in
Theorem~\ref{thm:termination}.  A lexicographically decreasing sequence in this
finite box must terminate.  Notice that no global bound on the number of moves
over all $m$ is needed for termination; only the existence of a strict drop at
each non-good state is used.

\subsection{Preservation of primitivity through long words}

A word is a sequence of defined moves, possibly with sign lifts between them.
Sign lifts preserve gcd.  Each defined move preserves gcd by
Lemma~\ref{lem:transport}.  Therefore every endpoint reached from the primitive
seed remains primitive.  This matters because several residual statements,
including the cone analysis and the finite cover, are stated for primitive
states.  There is no point in the descent where one passes to an imprimitive
representation and then divides: division would change $m$, and is never used.

\subsection{How FIN-CELL enters only once}

All non-computational descent lemmas are unconditional.  FIN-CELL is used in
exactly one place: to show that every deep residual sign-residue cell has a
length-$\le4$ escape.  Lemma~\ref{lem:compact} then converts this finite escape
statement into the uniform bound $\ell_W=5$, and Theorem~\ref{thm:uniform}
uses it for residual states.  If a future independent check altered the finite
cover, the affected implication would be FIN-CELL; the local primitive seed,
transport identities, elementary descents, residual gap, no-trap calculation,
and well-founded termination argument would be unchanged.

\section{Computer-assisted proof caveats}

The final theorem should be read in the same sense as other finite-certificate
computer-assisted proofs.  The proof has no remaining mathematical sampling
assumption: every depth-four subcell in the ceiling cones is enumerated by exact
LP, not by a grid or random search.  Still, two ordinary implementation
assumptions remain.

First, the code evaluating moves, residues, signs, and target predicates must
match the mathematical definitions in this report.  The determinacy lemmas
reduce this to finite read trees; the cover checker then evaluates those trees
on representatives.  A mismatch in implementation would be a checker defect,
not a new mathematical lemma.

Second, the recorded exact-rational computations must have been faithfully
executed.  The certificate is designed to be reproducible: the read-form
counts, cone counts, extreme-ray lists, sign-varying-form counts, and exact-LP
cover totals are finite outputs with deterministic expected values.  Independent
reproduction of those values is the natural audit route for the computational
part of the proof.

\end{document}